\newtheorem{theorem}{Theorem}[section]
\newtheorem{proposition}[theorem]{Proposition}
\newtheorem{corollary}[theorem]{Corollary}
\newtheorem{lemma}[theorem]{Lemma}
\newtheorem{conjecture}[theorem]{Conjecture}
\newtheorem{observation}[theorem]{Observation}
\theoremstyle{definition}
\newtheorem{definition}[theorem]{Definition}
\newtheorem{question}[theorem]{Question}
\newtheorem{remark}[theorem]{Remark}
\newcommand{\FF}{\mathbb{F} }
\newcommand{\NN}{\mathbb{N} }
\newcommand{\QQ}{\mathbb{Q} }
\newcommand{\ZZ}{\mathbb{Z} }
\newcommand{\bp}{\mathbf{p} }
\title{Distribution of the number of zeros of polynomials over a finite field}
\author{Ritik Jain}
\address{Ritik Jain, Department of Mathematics, Fordham University, New York, NY 10023}
\email{rjain14@fordham.edu}
\author{Han-Bom Moon}
\address{Han-Bom Moon, Department of Mathematics, Fordham University, New York, NY 10023}
\email{hmoon8@fordham.edu}
\author{Peter Wu}
\address{Peter Wu, Department of Mathematics, Fordham University, New York, NY 10023}
\email{pwu34@fordham.edu}
\date{\today}
\begin{document}

\maketitle

\begin{abstract}
We study the probability distribution of the number of zeros of multivariable polynomials with bounded degree over a finite field. We find the probability generating function for each set of bounded degree polynomials. In particular, in the single variable case, we show that as the degree of the polynomials and the order of the field simultaneously approach infinity, the distribution converges to a Poisson distribution. 
\end{abstract}

\section{Introduction}

For some prime number $p$, take a random polynomial $f \in \ZZ/p \ZZ[x]$ with positive degree $d$. What is the average number of distinct zeros of $f$ in $\ZZ/p \ZZ$? While it is clearly somewhere between $0$ and $p$, the exact answer is not quite obvious. Interestingly, the average is \emph{always} one, regardless of the degree $d$ and the prime $p$. The authors believe this fact is well known in the algebraic combinatorics community. 

In this paper, extending the above result, we investigate the distribution of the number of distinct zeros of a random polynomial over any finite field. Let $\FF_q$ be the finite field of order $q = p^r$, where $p$ is a prime number. Fix a nonnegative integer $d$, and consider the set $\FF_q[x_1, \cdots, x_n]_{(d, \cdots, d)}$ of polynomials with $n$ variables, $\FF_q$-coefficients, and degree at most $d$ with respect to each variable. This is a $(d+1)^n$-dimensional $\FF_q$-vector space. By selecting coefficients uniformly, we choose a random polynomial $f \in \FF_q[x_1, \cdots, x_n]_{(d, \cdots, d)}$. Let $X_{n,d} : \FF_q[x_1, \cdots, x_n]_{(d, \cdots, d)} \to \NN$ be the random variable of the number of distinct zeros of $f$. The main result of this paper is the computation of its probability generating function. 

\begin{theorem}[\protect{Theorem \ref{thm:distribution}}]\label{thm:probabilitygeneratingfunction}
Let $X_{n,d} : \FF_q[x_1, \cdots, x_n]_{(d, \cdots, d)} \to \NN$ be the random variable whose value is the number of distinct zeros of a random polynomial over $\FF_q$. If $d \ge q-1$, then the probability generating function for $X_{n,d}$ is given by 
\[
    \Phi_{n,d}(t) := \sum_{k \ge 0}P(X_{n,d} = k)t^k
    = \left(1+\frac{t-1}{q}\right)^{q^n}.
\]
\end{theorem}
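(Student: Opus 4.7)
The plan is to exploit the evaluation map and reduce the problem to a simple binomial calculation. Consider the $\FF_q$-linear evaluation map
\[
    \mathrm{ev}: \FF_q[x_1, \cdots, x_n]_{(d, \cdots, d)} \to \FF_q^{\FF_q^n}, \qquad f \mapsto (f(\ba))_{\ba \in \FF_q^n}.
\]
The first step is to show that $\mathrm{ev}$ is surjective whenever $d \ge q-1$. This is the standard interpolation fact: using the identity $x^q = x$ on $\FF_q$, any function $\FF_q^n \to \FF_q$ is represented by a polynomial of degree at most $q-1$ in each variable (e.g.\ via a Lagrange-type basis of indicator monomials $\prod_i \prod_{c \ne a_i}\frac{x_i - c}{a_i - c}$). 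Since $d \ge q-1$, such polynomials lie in the domain, so $\mathrm{ev}$ hits every tuple in $\FF_q^{\FF_q^n}$.

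The second step is the probabilistic upgrade. As a surjective $\FF_q$-linear map between finite $\FF_q$-vector spaces, every fiber of $\mathrm{ev}$ has the same cardinality. Consequently, if $f$ is chosen uniformly from $\FF_q[x_1, \cdots, x_n]_{(d, \cdots, d)}$, then the tuple $(f(\ba))_{\ba \in \FF_q^n}$ is uniformly distributed on $\FF_q^{\FF_q^n}$. In particular, the values $\{f(\ba)\}_{\ba \in \FF_q^n}$ are mutually independent, each uniform on $\FF_q$, so $\Pr(f(\ba) = 0) = 1/q$ independently for every $\ba$.

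The final step is the PGF computation. Writing $X_{n,d} = \sum_{\ba \in \FF_q^n} \mathbf{1}_{\{f(\ba)=0\}}$ expresses $X_{n,d}$ as a sum of $q^n$ i.i.d.\ Bernoulli$(1/q)$ random variables, i.e., $X_{n,d} \sim \mathrm{Binomial}(q^n, 1/q)$. Independence and the obvious identity $\bE[t^{\mathbf{1}_{\{f(\ba)=0\}}}] = \frac{1}{q}t + \frac{q-1}{q} = 1 + \frac{t-1}{q}$ give
\[
    \Phi_{n,d}(t) = \bE[t^{X_{n,d}}] = \prod_{\ba \in \FF_q^n} \bE[t^{\mathbf{1}_{\{f(\ba)=0\}}}] = \left(1 + \frac{t-1}{q}\right)^{q^n},
\]
as claimed. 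The only genuine content is the surjectivity of $\mathrm{ev}$; everything afterwards is automatic from the pigeonhole/uniformity principle for surjective linear maps on finite vector spaces, so I expect no serious obstacle.
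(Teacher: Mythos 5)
Your proof is correct, and it takes a genuinely different and more direct route than the paper. The paper first derives $\Phi_{1,d}(t)$ for arbitrary $d\ge 0$ from Leont\'ev's count of monic polynomials with $k$ distinct zeros, observes that the formula collapses for $d \ge q-1$, and then handles $n>1$ by induction: Lagrange interpolation plus the Chinese remainder theorem are used to show that the slices $g_a = f(x_1,\dots,x_{n-1},a)$ are independent and that the distribution stabilizes for $d \ge q-1$, after which a Cauchy-product manipulation of a multivariable generating function yields $\Phi_{n,d} = \Phi_{n-1,d}^{\,q}$. You instead observe that for $d \ge q-1$ the full evaluation map $\mathrm{ev}: \FF_q[x_1,\dots,x_n]_{(d,\dots,d)} \to \FF_q^{\FF_q^n}$ is surjective (the product Lagrange indicators lie in the domain), so uniformity of $f$ pushes forward to uniformity of the value vector $(f(\ba))_{\ba}$, making $X_{n,d}$ exactly $\mathrm{Binomial}(q^n, 1/q)$ with no induction needed. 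This is cleaner and subsumes the paper's stabilization and independence lemmas in one stroke; it also makes the binomial shape of the answer transparent rather than emergent from algebra. What the paper's longer route buys is the explicit formula for $\Phi_{1,d}(t)$ in the unstable range $d < q-1$ (where your argument cannot apply, since $\mathrm{ev}$ fails to be surjective for dimension reasons), but that extra generality is not needed for the stated theorem, which your argument proves completely.
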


\begin{corollary}\label{cor:meanvariance}
Under the same assumption, the expected value $E$ and the variance $V$ of $X_{n,d}$ are
\[
    E(X_{n,d}) = q^{n-1}, \qquad V(X_{n,d}) = q^{n-1}\left(1 - \frac{1}{q}\right).
\]
\end{corollary}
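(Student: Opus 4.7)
The plan is to derive both moments directly from the probability generating function supplied by Theorem~\ref{thm:probabilitygeneratingfunction}, using the standard identities $E(X_{n,d}) = \Phi_{n,d}'(1)$ and $E(X_{n,d}(X_{n,d}-1)) = \Phi_{n,d}''(1)$, from which the variance follows via $V(X_{n,d}) = \Phi_{n,d}''(1) + \Phi_{n,d}'(1) - \Phi_{n,d}'(1)^2$. Since the formula $\Phi_{n,d}(t) = (1 + (t-1)/q)^{q^n}$ is a polynomial in $t$ with a transparent chain-rule structure, the derivatives should be painless to evaluate at $t=1$.

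First I would differentiate once to get $\Phi_{n,d}'(t) = q^{n-1}(1 + (t-1)/q)^{q^n - 1}$. Setting $t=1$ makes the base equal to $1$, yielding $E(X_{n,d}) = q^{n-1}$. Differentiating once more gives $\Phi_{n,d}''(t) = q^{n-2}(q^n - 1)(1 + (t-1)/q)^{q^n - 2}$, so $\Phi_{n,d}''(1) = q^{n-2}(q^n - 1)$. Substituting into the variance identity,
\[
V(X_{n,d}) = q^{n-2}(q^n-1) + q^{n-1} - q^{2(n-1)} = q^{n-1} - q^{n-2} = q^{n-1}\!\left(1 - \frac{1}{q}\right),
\]
which is the desired expression.

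There is essentially no obstacle here; the only mild observation worth flagging (either in the proof or in a remark) is that $\Phi_{n,d}(t)$ is precisely the probability generating function of a $\mathrm{Binomial}(q^n, 1/q)$ random variable, since $(1 + p(t-1))^N$ is the PGF of $\mathrm{Binomial}(N,p)$. The moment formulas $Np = q^{n-1}$ and $Np(1-p) = q^{n-1}(1 - 1/q)$ then provide an immediate sanity check (and in fact give the cleanest one-line derivation of the corollary). I would likely state the proof via the direct differentiation, and append a one-sentence remark identifying the binomial structure, since it foreshadows the Poisson limit mentioned in the abstract.
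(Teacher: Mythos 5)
Your proof is correct and matches the paper's approach: the paper also obtains these moments by differentiating the probability generating function and evaluating at $t=1$, exactly as in its single-variable computation (Corollary~\ref{cor:meanandvar}), leaving the multivariable details to the reader. Your added remark identifying $\Phi_{n,d}$ as the PGF of a $\mathrm{Binomial}(q^n, 1/q)$ distribution is a nice sanity check consistent with the Poisson limit the paper discusses.
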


We obtain the following result when $n = 1$ by taking the limit $q \to \infty$. 

\begin{corollary}\label{cor:Possion}
If $q \to \infty$, the probability distribution of $X_{1,d}$ converges to the Poisson distribution with parameter $1$.
\end{corollary}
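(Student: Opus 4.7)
The plan is to read off the limit directly from the probability generating function provided by Theorem \ref{thm:probabilitygeneratingfunction}. Since the hypothesis $d \ge q-1$ is the regime in which the formula $\Phi_{1,d}(t) = \left(1 + \frac{t-1}{q}\right)^q$ holds, as $q \to \infty$ (with $d$ implicitly growing so that $d \ge q-1$) we have the elementary limit
\[
\lim_{q \to \infty} \Phi_{1,d}(t) = \lim_{q \to \infty}\left(1 + \frac{t-1}{q}\right)^q = e^{t-1},
\]
and $e^{t-1} = \sum_{k \ge 0} \frac{e^{-1}}{k!}\,t^k$ is precisely the probability generating function of the Poisson distribution with parameter $1$.

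To upgrade this pointwise convergence of generating functions into convergence of distributions, I would invoke the continuity theorem for probability generating functions on $\NN$: if a sequence of PGFs converges pointwise on $[0,1]$ to a function that is itself a PGF, then the underlying distributions converge. Alternatively, one can argue directly by extracting coefficients. For fixed $k$, the binomial expansion gives
\[
P(X_{1,d} = k) = [t^k]\left(1 + \frac{t-1}{q}\right)^q = \binom{q}{k}\frac{1}{q^k}\left(1-\frac{1}{q}\right)^{q-k},
\]
and an elementary estimate shows that $\binom{q}{k}/q^k \to 1/k!$ and $(1 - 1/q)^{q-k} \to e^{-1}$ as $q \to \infty$, so
\[
\lim_{q \to \infty} P(X_{1,d} = k) = \frac{e^{-1}}{k!},
\]
which is the Poisson$(1)$ mass function.

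I do not foresee a serious obstacle: the only point requiring a little care is the interpretation of the limit, since for each $q$ the random variable $X_{1,d}$ lives on a different probability space and the condition $d \ge q-1$ must be maintained. Specifying that $d = d(q)$ is any function with $d(q) \ge q-1$ resolves this, and the coefficient-wise computation above makes the convergence in distribution completely explicit, avoiding any appeal to general continuity theorems.
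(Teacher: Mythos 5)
Your proof is correct and follows essentially the same route as the paper: take the limit $\lim_{q\to\infty}\left(1+\frac{t-1}{q}\right)^q = e^{t-1}$ of the probability generating function from Corollary \ref{cor:pgf} and recognize it as the Poisson$(1)$ generating function. Your additional coefficient-wise computation showing $P(X_{1,d}=k)\to e^{-1}/k!$ directly is a nice touch that makes the convergence in distribution explicit, but it is not a different method.
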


In Section \ref{sec:average}, we compute the average number of zeros using the incidence variety method. Though the computation is not later used to calculate $\Phi_{n,d}(t)$, we included the proof because it 1) explains the anticipated expected value without complex calculation, and 2) is flexible, as it works for many variations of the sample space, and only requires a nonnegative degree $d$. See Remark \ref{rmk:totaldegreeaverage}. Section \ref{sec:singlevar} is devoted to computing $\Phi_{1,d}(t)$. Our method is based on a similar computation by Leont\'ev \cite{Leo06}, who studied the case of single-variable monic polynomials of a fixed degree. Our result (Theorem \ref{thm:pgf}) for $\Phi_{1,d}(t)$ is a careful application of his result. 

Our main contribution is in Section \ref{sec:multivar}. By employing elementary algebraic tools such as Lagrange interpolation and the Chinese remainder theorem, we show that $\Phi_{n,d}(t)$ is stable when $d \ge q-1$, then calculate it explicitly in Theorem \ref{thm:distribution}. Finally, in the last section, we discuss some related questions. 

For real polynomials, there have been numerous results on an analogous problem. If the coefficients are independent and normally distributed, a classical result of Kac asserts that the average converges to $\frac{2}{\pi}\log d$ \cite{Kac43}. See \cite{EK95} for a nice survey, its connection with geometry, and references. For more recent results, see, for example, \cite{DPSZ02, NV21}. 

For a similar question over $p$-adic fields, there have also been several results, including \cite{Eva06, BCFG22}. Since the ring of $p$-adic integers can be obtained by taking the inverse limit of $\ZZ/p^k\ZZ$, linking these results with the case of $\ZZ/p^k\ZZ$-coefficients will be interesting. See Question \ref{que:p-adic}.

\acknowledgement

P.W. was supported by the Fordham Summer Research Assistant Fellowship.


\section{The average number of zeros}\label{sec:average}

In this section, we fix the notation. Then, using the incidence variety, we show that the average number of zeros of a random polynomial with nonnegative degree is $q^{n-1}$. 

Let $\FF_q$ be the finite field of order $q = p^r$, where $p$ is a prime number. The ring of polynomials with $n$ variables and $\FF_q$-coefficients is denoted by $\FF_q[x_1, \cdots, x_n]$. 

\begin{definition}\label{def:multidegree} 
Fix an integer $d \ge 0$. The subset of $\FF_q[x_1, \cdots, x_n]$ of polynomials whose degree is at most $d$ with respect to $x_i$ for each $1 \le i \le n$ is denoted by $\FF_q[x_1, \cdots, x_n]_{(d,\cdots, d)}$. The subset of polynomials whose total degree is at most $d$ is denoted by $\FF_q[x_1, \cdots, x_n]_d$.
\end{definition}

Clearly, $\FF_q[x_1, \cdots, x_n]_d \subset \FF_q[x_1, \cdots, x_n]_{(d,\cdots, d)}$. If $n = 1$, $\FF_q[x_1]_d = \FF_q[x_1]_{(d)}$. 

Since $\FF_q[x_1, \cdots, x_n]_{(d, \cdots, d)}$ and $\FF_q[x_1, \cdots, x_n]_d$ are finite-dimensional $\FF_q$-vector spaces, we can choose a random polynomial by fixing a basis and uniformly selecting coefficients. 

\begin{definition}\label{def:zero}
A point $\bp := (a_1, \cdots, a_n) \in \FF_q^n$ is a \emph{zero} of a polynomial $f \in \FF_q[x_1, \cdots, x_n]$ if $f(\bp) = 0$.
\end{definition}

Thus, we only consider zeros lying in $\FF_q^n$.

\begin{theorem}\label{thm:affinemean}
Fix $d \ge 0$. The average number of zeros of a random polynomial in $\FF_q[x_1, \cdots, x_n]_{(d,\cdots, d)}$ is $q^{n-1}$.
\end{theorem}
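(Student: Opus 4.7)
The plan is to use a double-counting argument on the incidence set
\[
    I := \{(f, \bp) \in \FF_q[x_1, \cdots, x_n]_{(d,\cdots, d)} \times \FF_q^n : f(\bp) = 0\}.
\]
Counting $|I|$ by summing over polynomials $f$ yields, after dividing by the size of the ambient vector space $V := \FF_q[x_1, \cdots, x_n]_{(d,\cdots, d)}$, exactly the expected number of zeros. Counting $|I|$ by summing over points $\bp \in \FF_q^n$ reduces the problem to computing, for each fixed $\bp$, the number of polynomials $f \in V$ vanishing at $\bp$.

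The key step is the latter count. For each $\bp \in \FF_q^n$, the evaluation map $\mathrm{ev}_\bp : V \to \FF_q$, $f \mapsto f(\bp)$, is $\FF_q$-linear. Since $V$ contains all constant polynomials, $\mathrm{ev}_\bp$ is surjective, so its kernel has index $q$ in $V$. Hence the number of polynomials in $V$ vanishing at $\bp$ is exactly $|V|/q$, independent of $\bp$.

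Combining the two counts, I would conclude
\[
    |I| = \sum_{\bp \in \FF_q^n} \frac{|V|}{q} = q^n \cdot \frac{|V|}{q} = q^{n-1} |V|,
\]
and therefore the expected number of zeros of a random $f \in V$ equals $|I|/|V| = q^{n-1}$.

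The main conceptual point (and the only place where the hypothesis on $V$ enters) is the surjectivity of $\mathrm{ev}_\bp$; this is immediate for $V$ since constants are allowed, but the same strategy would adapt to any sample space containing constants and closed under scalars, which is what makes the argument flexible (as highlighted in Remark \ref{rmk:totaldegreeaverage}). No real obstacle is expected — the argument is a clean instance of counting an incidence set in two ways.
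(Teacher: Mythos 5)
Your proposal is correct and is essentially identical to the paper's own argument: the paper also counts the incidence set $\{(\bp,f) : f(\bp)=0\}$ in two ways and uses the surjectivity of the evaluation map $\mathrm{ev}_\bp$ to show each fiber over $\bp$ has size $|V|/q$. No differences worth noting.
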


\begin{proof}
Consider the \emph{incidence variety}
\[
    \mathrm{Inc} := \{(\bp, f)\;|\; f(\bp) = 0\} \subset \FF_q^n \times \FF_q[x_1, \cdots, x_n]_{(d, \cdots, d)}.
\]
Then, there are two projection maps 
\[
\xymatrix{
& \mathrm{Inc} \ar[ld]_{\pi_1} \ar[rd]^{\pi_2}\\
\FF_q^n && \FF_q[x_1, \cdots, x_n]_{(d, \cdots, d)},
}
\]
where $\pi_1(\bp, f) := \bp$ and $\pi_2(\bp, f) := f$. For each $f \in \FF_q[x_1, \cdots, x_n]_{(d, \cdots, d)}$, $|\pi_2^{-1}(f)|$ is the number of zeros of $f$. Thus, the desired average is given by 
\[
    \frac{|\mathrm{Inc}|}{|\FF_q[x_1, \cdots, x_n]_{(d, \cdots, d)}|}.
\]

First of all, the dimension of $\FF_q[x_1, \cdots, x_n]_{(d, \cdots, d)}$ is $(d+1)^n$. On the other hand, for $|\mathrm{Inc}|$, observe that, for any $\bp \in \FF_q^n$, we may define the evaluation map $\mathrm{ev}_\bp : \FF_q[x_1, \cdots, x_n]_{(d, \cdots, d)} \to \FF_q$ by setting $\mathrm{ev}_\bp (f) := f(\bp)$. Then, for each $\bp \in \FF_q^n$, $\pi_1^{-1}(\bp)$ is in bijection with $\ker \mathrm{ev}_\bp$. Since $\mathrm{ev}_\bp$ is clearly surjective, by the dimension theorem, 
\[
    |\pi_1^{-1}(\bp)| = |\ker \mathrm{ev}_\bp| = 
    q^{(d+1)^n - 1}.
\]
Since every fiber of $\pi_1$ has the same cardinality, for any $\bp \in \FF_q^n$, $|\mathrm{Inc}| = |\FF_q^n||\pi_1^{-1}(\bp)| = q^{(d+1)^n + n - 1}$.

In sum, we have 
\[
    \frac{|\mathrm{Inc}|}{\FF_q[x_1, \cdots, x_n]_{(d, \cdots, d)}} = 
    \frac{q^{(d+1)^n + n-1}}{q^{(d+1)^n}} = q^{n-1}.
\]
\end{proof}

\begin{remark}\label{rmk:totaldegreeaverage}
\begin{enumerate}
\item By similar logic, it is straightforward to show that the average number of zeros of a random polynomial in $\FF_q[x_1, \cdots, x_n]_d$ is also $q^{n-1}$. 
\item Unlike Theorem~\ref{thm:probabilitygeneratingfunction}, the result does not necessarily require $d \ge q-1$ .
\end{enumerate}
\end{remark}

Since the average in Theorem \ref{thm:affinemean} does not depend on the degree $d$, we can make the following conclusion by taking the limit $d \to \infty$. 

\begin{corollary}\label{cor:affinemean}
The average number of distinct zeros of a randomly chosen polynomial in $\FF_q[x_1, \cdots, x_n]$ is $q^{n-1}$. In particular, on average, a random polynomial in $\FF_q[x]$ has precisely one zero. 
\end{corollary}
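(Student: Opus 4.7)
The plan is to leverage Theorem~\ref{thm:affinemean} essentially as a black box. That theorem already provides the sharper statement that, for \emph{every} nonnegative integer $d$, the average number of zeros of a random polynomial in $\FF_q[x_1,\ldots,x_n]_{(d,\ldots,d)}$ equals $q^{n-1}$, a value that is independent of $d$. To pass from the finite-dimensional subspaces to the whole polynomial ring, I would use the fact that
\[
    \FF_q[x_1,\ldots,x_n] \;=\; \bigcup_{d \ge 0} \FF_q[x_1,\ldots,x_n]_{(d,\ldots,d)},
\]
an increasing union of finite $\FF_q$-vector spaces on each of which the uniform distribution makes sense. Following the hint in the paragraph preceding the corollary, I would interpret the phrase ``randomly chosen polynomial in $\FF_q[x_1,\ldots,x_n]$'' as the $d \to \infty$ limit of the uniform distributions on these truncations, so that the expected number of zeros over the whole ring is defined as $\lim_{d \to \infty} E(X_{n,d})$.

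With this interpretation, the proof is immediate: by Theorem~\ref{thm:affinemean}, the sequence $\{E(X_{n,d})\}_{d \ge 0}$ is the constant sequence $q^{n-1}$, and so its limit is $q^{n-1}$ as well. The ``in particular'' statement follows by specializing to $n=1$, where $q^{n-1} = 1$. There is genuinely no technical obstacle here, since we are taking the limit of a constant sequence; the only conceptual point worth flagging is that, because $\FF_q[x_1,\ldots,x_n]$ is countably infinite, no honest uniform distribution on it exists, and the statement must be read as shorthand for the inductive-limit formulation above.
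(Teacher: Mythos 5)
Your proposal is correct and matches the paper's own justification, which is exactly the observation in the sentence preceding the corollary: since Theorem~\ref{thm:affinemean} gives the constant value $q^{n-1}$ for every $d$, the statement follows by taking $d \to \infty$. Your additional remark that the claim must be read as an inductive limit of uniform distributions (since no uniform distribution exists on the countably infinite ring) is a reasonable clarification of what the paper leaves implicit.
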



\section{Single variable case}\label{sec:singlevar}

Here, we compute the probability generating function for the  $n = 1$ variable case. 

\begin{definition}\label{def:Xnd}
Let $X_{n,d} : \FF_d[x_1, \cdots, x_n]_{(d,\cdots, d)} \to \NN$ be the random variable of the number of distinct zeros in $\FF_q$ of a random polynomial. 
\end{definition}

The distribution of the number of zeros of a random polynomial was studied by Leont\'ev under a slightly different setup \cite{Leo06} for $n = 1$. We deduce our generating function 
\begin{equation}\label{eqn:probgenfunctionn=1}
    \Phi_{1,d}(t) := \sum_{k \ge 0}P(X_{n,d} = k)t^k
\end{equation}
from his combinatorial computation. We emphasize that the $d \ge q-1$ assumption is unnecessary for the proof we provide here for $n = 1$ variables.

Let $B_{d,q,k}$ be the number of degree $d$ \emph{monic} polynomials in $\FF_q[x]$ with $k$ distinct zeros. Using the inclusion-exclusion principle and residue calculation, Leont\'ev showed that the set of $B_{d,q,k}$ satisfies the following generating function \cite[Lemma 4]{Leo06}:
\begin{equation}\label{eqn:Leontevgenerating}
    \sum_{k \ge 0}\frac{B_{d,q,k}}{q^k}t^k = \sum_{i=0}^d\frac{{q \choose i}}{q^i}(t-1)^i.
\end{equation}

From this result, we compute the probability generating function over $\FF_q[x]_d$. We use the convention that the zero polynomial has degree $-\infty$.

\begin{lemma}\label{lem:cdqk}
Let $C_{d,q,k}$ be the number of polynomials in $\FF_q[x]_d$ with $k$ distinct zeros. Then
\[
    C_{d,q,k} = \begin{cases}
        (q-1)\sum_{e=0}^{d}B_{e,q,k}, &k \ne q,\\
        (q-1)\sum_{e=0}^{d}B_{e,q,q} + 1, &k = q.
    \end{cases}
\]
\end{lemma}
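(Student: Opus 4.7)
The plan is to stratify $\FF_q[x]_d$ by the actual degree of each polynomial and then relate each stratum to the monic count $B_{e,q,k}$. Concretely, I would write
\[
    \FF_q[x]_d = \{0\} \;\sqcup\; \bigsqcup_{e=0}^{d} \{f \in \FF_q[x] : \deg f = e\},
\]
which is a disjoint union by the convention $\deg 0 = -\infty$. The strategy is to count, for each piece, the number of polynomials with exactly $k$ distinct zeros, and then sum.

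For each fixed $e$ with $0 \le e \le d$, every polynomial $f$ of degree exactly $e$ factors uniquely as $f = c g$, where $c \in \FF_q^\times$ is the leading coefficient and $g$ is the associated monic polynomial of degree $e$. This gives a bijection
\[
    \{f \in \FF_q[x] : \deg f = e\} \;\longleftrightarrow\; \FF_q^\times \times \{\text{monic } g \in \FF_q[x] \text{ with } \deg g = e\}.
\]
Since $c \in \FF_q^\times$, the polynomials $f = cg$ and $g$ have identical zero sets in $\FF_q$. Hence the number of degree-$e$ polynomials with exactly $k$ distinct zeros is $(q-1) B_{e,q,k}$. Summing over $e = 0, 1, \ldots, d$ yields
\[
    \#\{f \in \FF_q[x]_d \setminus \{0\} : f \text{ has } k \text{ distinct zeros}\} = (q-1)\sum_{e=0}^{d} B_{e,q,k}.
\]

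It remains to account for the zero polynomial, which lies in $\FF_q[x]_d$ but contributes to none of the $B_{e,q,k}$. Since $0$ vanishes on every element of $\FF_q$, it has precisely $q$ distinct zeros in $\FF_q$. Adding this single polynomial therefore contributes $+1$ to $C_{d,q,k}$ exactly when $k = q$, which produces the case split in the statement. There is no genuine obstacle here; the only subtle point is bookkeeping around the zero polynomial and the degree-$0$ case (where $B_{0,q,0} = 1$ correctly recovers the $q-1$ nonzero constants, all with zero roots), and the case distinction on $k = q$ is precisely the mechanism that handles it.
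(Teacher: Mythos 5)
Your proposal is correct and follows essentially the same approach as the paper's proof: stratify by exact degree, use the bijection $f \mapsto (c, g)$ with $c$ the leading coefficient and $g$ monic (noting that nonzero scaling preserves the zero set), and account separately for the zero polynomial, which has all $q$ elements of $\FF_q$ as zeros and hence contributes the $+1$ in the $k = q$ case. The paper states this more tersely, but the argument is identical.
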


\begin{proof}
Recall that $\FF_q[x]_d$ is the vector space of all polynomials of degree at most $d$. As nonzero scalar multiplication does not change the number of zeros of a given polynomial, to count the number of all polynomials with degree at most $d$, we obtain the first formula. When $k = q$, we add one to account for the zero polynomial.
\end{proof}

\begin{theorem}\label{thm:pgf}
The probability generating function $\Phi_{1,d}(t)$ in \eqref{eqn:probgenfunctionn=1} satisfies
\begin{equation}\label{eqn:pgnn=1}
    \Phi_{1,d}(t) = \sum_{i=0}^d \frac{(q^{d+1-i}-1){q \choose i}}{q^{d+1}}(t-1)^i + \frac{t^q}{q^{d+1}}.
\end{equation}
\end{theorem}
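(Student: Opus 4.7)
The plan is to decompose $\sum_k C_{d,q,k} t^k$ via Lemma \ref{lem:cdqk}, then apply the generating function identity \eqref{eqn:Leontevgenerating} to each degree $e \le d$, and finally collapse the resulting double sum through a finite geometric series. Since $|\FF_q[x]_d| = q^{d+1}$, I would begin by writing
\[
\Phi_{1,d}(t) = \frac{1}{q^{d+1}} \sum_{k \ge 0} C_{d,q,k} t^k
\]
and substituting the formula of Lemma \ref{lem:cdqk}, so that the $+1$ in the $k = q$ case becomes an isolated $t^q$:
\[
\Phi_{1,d}(t) = \frac{q-1}{q^{d+1}} \sum_{e=0}^d \left( \sum_{k \ge 0} B_{e,q,k} t^k \right) + \frac{t^q}{q^{d+1}}.
\]

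Next, I would invoke Leont\'ev's generating function \eqref{eqn:Leontevgenerating} (clearing its denominator so that $B_{e,q,k}$ itself, rather than its scaled version, appears) to obtain, for each fixed $e \ge 0$,
\[
\sum_{k \ge 0} B_{e,q,k} t^k = \sum_{i=0}^e \binom{q}{i} q^{e-i}(t-1)^i.
\]
The core manipulation is then to swap the order of summation over $e$ and $i$: the coefficient of $\binom{q}{i}(t-1)^i$ becomes $\sum_{e=i}^{d} q^{e-i}$, a finite geometric series evaluating to $(q^{d-i+1}-1)/(q-1)$. Multiplying through by $(q-1)/q^{d+1}$ cancels the $q-1$ cleanly, and adding the isolated $t^q/q^{d+1}$ from Lemma \ref{lem:cdqk} reproduces \eqref{eqn:pgnn=1} on the nose.

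I do not anticipate any serious obstacle beyond this bookkeeping; the only delicate point is keeping the zero polynomial's contribution segregated throughout, so that it emerges as the stand-alone term $t^q/q^{d+1}$ rather than being absorbed into the binomial sum. Correctly translating Leont\'ev's formula---which concerns monic polynomials of a \emph{fixed} degree---into a statement about the full sample space $\FF_q[x]_d$ of polynomials of degree at most $d$ is already achieved by Lemma \ref{lem:cdqk}, via the factor of $q-1$ encoding nonzero scalar multiplication and the widened summation range $0 \le e \le d$.
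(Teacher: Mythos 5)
Your proposal is correct and follows essentially the same route as the paper's proof: substitute Lemma \ref{lem:cdqk}, apply Leont\'ev's identity to each degree $e$, interchange the sums over $e$ and $i$, and evaluate the geometric series $\sum_{e=i}^{d} q^{e-i} = (q^{d+1-i}-1)/(q-1)$ so that the factor $q-1$ cancels against the prefactor. The only (harmless) difference is that you clear Leont\'ev's normalization up front --- correctly reading the left-hand side of \eqref{eqn:Leontevgenerating} as $\sum_k B_{e,q,k}q^{-e}t^k$ for degree-$e$ monic polynomials, which is how the paper itself uses it --- whereas the paper carries the powers of $q$ along and cancels them at the end.
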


\begin{proof}
Note that 
\[
    \Phi_{1,d}(t) = \sum_{k \ge 0}P(X_{1,d} = k)x^k = \sum_{k \ge 0}\frac{C_{d,q,k}}{q^{d+1}}t^k.
\]
By Lemma \ref{lem:cdqk}, 
\begin{equation}
\begin{split}
    \Phi_{1,d}(t) &= 
    \sum_{k \ge 0}(q-1)\sum_{e=0}^{d}\frac{B_{e,q,k}}{q^{d+1}}t^k + \frac{t^q}{q^{d+1}} \\
    &= \frac{q-1}{q}\sum_{k \ge 0}\sum_{e=0}^{d}\frac{B_{e,q,k}}{q^d}t^k + \frac{t^q}{q^{d+1}} = \frac{q-1}{q}\sum_{e=0}^{d}\sum_{k \ge 0}\frac{B_{e,q,k}}{q^{d-e}q^e}t^k + \frac{t^q}{q^{d+1}} \\
    &= \frac{q-1}{q}\sum_{e=0}^{d}\frac{1}{q^{d-e}}\left(\sum_{k \ge 0}\frac{B_{e,q,k}}{q^e}t^k\right) + \frac{t^q}{q^{d+1}}
    = \frac{q-1}{q^{d+1}}\sum_{e=0}^{d}q^{e}\left(\sum_{k \ge 0}\frac{B_{e,q,k}}{q^e}t^k\right) + \frac{t^q}{q^{d+1}}.
\end{split}
\end{equation}
Using \eqref{eqn:Leontevgenerating}, we have
\begin{equation*}
\begin{split}
    \Phi_{1,d}(t) &= \frac{q-1}{q^{d+1}}\sum_{e=0}^{d}q^e\left(\sum_{i=0}^e\frac{{q \choose i}}{q^i}(t-1)^i\right) + \frac{t^q}{q^{d+1}}
    = \frac{q-1}{q^{d+1}}\sum_{e=0}^{d}\sum_{i=0}^{e}\frac{\binom{q}{i}}{q^i}(t-1)^i q^e + \frac{t^q}{q^{d+1}} \\
    &= \frac{q-1}{q^{d+1}}\sum_{i=0}^{d}\left(\sum_{e=i}^{d}q^e\right)\frac{\binom{q}{i}}{q^i}(t-1)^i + \frac{t^q}{q^{d+1}}
    = \frac{q-1}{q^{d+1}}\sum_{i=0}^{d}\frac{q^i (q^{d+1-i}-1)}{q-1}\frac{\binom{q}{i}}{q^i}(t-1)^i + \frac{t^q}{q^{d+1}} \\
    &= \sum_{i=0}^d \frac{(q^{d+1-i}-1){q \choose i}}{q^{d+1}}(t-1)^i + \frac{t^q}{q^{d+1}}.
\end{split}
\end{equation*}
\end{proof}

From the generating function, we immediately obtain the mean and the variance of the number of zeros. 

\begin{corollary}\label{cor:meanandvar}
Let $E$ and $V$ respectively denote the mean and variance of $X_{1,d}$. Then 
\begin{equation}\label{eqn:meanvariance}
E(X_{1,d}) = 1, \qquad V(X_{1,d}) = 1 - \frac{1}{q}.
\end{equation}
\end{corollary}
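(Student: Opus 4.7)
The plan is to extract both moments directly from the probability generating function $\Phi_{1,d}(t)$ established in Theorem~\ref{thm:pgf}, using the standard identities $E(X_{1,d}) = \Phi_{1,d}'(1)$ and $V(X_{1,d}) = \Phi_{1,d}''(1) + \Phi_{1,d}'(1) - \Phi_{1,d}'(1)^2$. The computation is made especially clean by the presentation of $\Phi_{1,d}(t)$ as a polynomial in $(t-1)$ plus a single $t^q$ term: differentiating the $(t-1)^i$ summands and evaluating at $t=1$ kills every term with $(t-1)$-exponent strictly greater than the order of the derivative, so only a handful of summands contribute.

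For the first derivative, the only surviving piece of the sum at $t=1$ is the $i=1$ term, which evaluates to $\frac{(q^{d}-1)\binom{q}{1}}{q^{d+1}} = 1 - q^{-d}$, while the derivative of $t^q/q^{d+1}$ contributes $q/q^{d+1} = q^{-d}$. Adding these gives $E(X_{1,d}) = 1$, in agreement with Corollary~\ref{cor:affinemean}. For the second derivative evaluated at $t=1$, only the $i=2$ term of the sum survives, contributing $\frac{2(q^{d-1}-1)\binom{q}{2}}{q^{d+1}} = \frac{(q^{d-1}-1)q(q-1)}{q^{d+1}}$, and the second derivative of $t^q/q^{d+1}$ at $t=1$ contributes $\frac{q(q-1)}{q^{d+1}}$. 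Summing and simplifying gives $\Phi_{1,d}''(1) = \frac{q^d(q-1)}{q^{d+1}} = 1 - \frac{1}{q}$.

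Plugging into the variance formula yields $V(X_{1,d}) = (1 - 1/q) + 1 - 1 = 1 - 1/q$, completing the proof. There is no genuine obstacle here; the only mild bookkeeping issue is remembering to differentiate the trailing $t^q/q^{d+1}$ term separately and then verifying that its contribution exactly supplies the $q^{-d}$ needed to make $E(X_{1,d})$ independent of $d$, and similarly that it combines cleanly with the $i=2$ term via the identity $(q^{d-1}-1)+1 = q^{d-1}$ to produce the stated variance.
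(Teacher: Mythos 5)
Your proposal is correct and follows exactly the paper's route: the paper's own proof invokes the identities $E(X_{1,d}) = \Phi_{1,d}'(1)$ and $V(X_{1,d}) = \Phi_{1,d}''(1) + \Phi_{1,d}'(1) - (\Phi_{1,d}'(1))^2$ and leaves the differentiation of \eqref{eqn:pgnn=1} at $t=1$ to the reader, which is precisely the computation you carry out. Your explicit bookkeeping of the $i=1$, $i=2$, and $t^q/q^{d+1}$ contributions is accurate and simply fills in the details the paper omits.
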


\begin{proof}
Since $E(X_{1,d}) = \sum_{k \ge 0}k P(X_{1,d} = k) = \Phi_{1,d}'(1)$, and 
\[
\begin{split}
    V(X_{1,d}) &= E(X_{1,d}^2) - E(X_{1,d})^2 = \sum_{k \ge 0}k^2 P(X_{1,d}=k)- \left(\sum_{k \ge 0}kP(X_{1,d}=k)\right)^2\\
    &= \Phi_{1,d}''(1) + \Phi_{1,d}'(1) - (\Phi_{1,d}'(1))^2,
\end{split}
\]
the result immediately follows from Theorem \ref{thm:pgf} by computing the first and second derivatives of $\Phi_{1,d}(t)$ at the point $t = 1$.
\end{proof}

When $d$ is large relative to $q$, the generating function $\Phi_{1,d}(t)$ reduces to a surprisingly simple formula. 

\begin{corollary}\label{cor:pgf}
If $d \ge q-1$, the formula in \eqref{eqn:pgnn=1} simplifies to 
\[
    \Phi_{1,d}(t) = \left(1+\frac{t-1}{q}\right)^q.
\]
\end{corollary}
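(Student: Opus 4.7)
The plan is to start from the formula in Theorem~\ref{thm:pgf} and exploit the hypothesis $d \ge q-1$ to rewrite the sum. Since $\binom{q}{i}=0$ whenever $i > q$, the condition $d \ge q-1$ means the summation index $i$ in
\[
\sum_{i=0}^d \frac{(q^{d+1-i}-1)\binom{q}{i}}{q^{d+1}}(t-1)^i
\]
can freely be taken to run from $0$ to $q$ without changing the value. This is the only place the assumption $d \ge q-1$ is used.

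Next, I would split the factor $q^{d+1-i} - 1$ into its two summands. The first part produces
\[
\sum_{i=0}^{q} \frac{q^{d+1-i}\binom{q}{i}}{q^{d+1}}(t-1)^i = \sum_{i=0}^{q} \binom{q}{i}\left(\frac{t-1}{q}\right)^i = \left(1 + \frac{t-1}{q}\right)^q
\]
by the binomial theorem; this is exactly the target expression. The second part yields
\[
-\frac{1}{q^{d+1}}\sum_{i=0}^{q}\binom{q}{i}(t-1)^i = -\frac{(1+(t-1))^q}{q^{d+1}} = -\frac{t^q}{q^{d+1}},
\]
again by the binomial theorem.

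Finally, I would observe that this negative $t^q/q^{d+1}$ is precisely cancelled by the stray term $+t^q/q^{d+1}$ at the end of the formula in Theorem~\ref{thm:pgf}, leaving exactly $\left(1+\frac{t-1}{q}\right)^q$. There is no real obstacle here: the argument is a two-line binomial theorem calculation, and the only conceptual point is that the ``correction'' term $t^q/q^{d+1}$ coming from the zero polynomial in Lemma~\ref{lem:cdqk} conspires to cancel the $-1$ in $q^{d+1-i}-1$ after the binomial expansion.
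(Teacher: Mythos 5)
Your proposal follows essentially the same route as the paper: replace the upper limit $d$ by $q$, split $q^{d+1-i}-1$ into its two pieces, apply the binomial theorem to each, and observe that the resulting $-t^q/q^{d+1}$ cancels against the trailing $+t^q/q^{d+1}$ from Theorem~\ref{thm:pgf}. The computation is correct. The one imprecision is your stated justification for changing the summation limit: the vanishing of $\binom{q}{i}$ for $i>q$ only covers the case $d\ge q$, where you are \emph{dropping} extraneous terms. In the boundary case $d=q-1$ you must instead \emph{add} the $i=q$ term, and there $\binom{q}{q}=1\ne 0$; that term vanishes for a different reason, namely that its other factor is $q^{d+1-q}-1=q^{0}-1=0$. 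The paper sidesteps this by proving the identity for $d\ge q$ and then checking separately that $\Phi_{1,q-1}(t)=\Phi_{1,q}(t)$; your uniform treatment is arguably cleaner, but only once the correct reason for the harmlessness of the $i=q$ term at $d=q-1$ is supplied.
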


\begin{proof}
When $d \ge q$, since $P(X_{1,d} = k) = 0$ for $k > q$, we have
\[
    \Phi_{1,d}(t) = \sum_{i=0}^q \frac{(q^{d+1-i}-1){q \choose i}}{q^{d+1}}(t-1)^i + \frac{t^q}{q^{d+1}}.
\]
Using the binomial theorem, we obtain
\[
\begin{split}
\Phi_{1,d}(t) &= \sum_{i=0}^q{q \choose i}\left(\frac{t-1}{q}\right)^i - \sum_{i=0}^q \frac{{q \choose i}}{q^{d+1}}(t-1)^i + \frac{t^q}{q^{d+1}} \\
&= \left(1 + \frac{t-1}{q}\right)^q - \frac{(1+(t-1))^q}{q^{d+1}}+ \frac{t^q}{q^{d+1}} 
= \left(1 + \frac{t-1}{q}\right)^q.
\end{split}
\]
Now, a routine calculation shows that $\Phi_{1,q-1}(t) = \Phi_{1,q}(t)$, yielding the desired result.
\end{proof}

Corollary \ref{cor:pgf} implies the following interesting consequence, generalizing what Leont\'ev observed for monic polynomials of degree $d = q-1$.

\begin{corollary}\label{cor:poisson}
For $d \ge q-1$, as $q \to \infty$, the probability distribution of the number of zeros of a random polynomial in $\FF_q[x]_d$ converges to a Poisson distribution with parameter $1$.
\end{corollary}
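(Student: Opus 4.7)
The plan is to use Corollary \ref{cor:pgf} directly, which hands us the closed form
\[
\Phi_{1,d}(t) = \left(1+\frac{t-1}{q}\right)^q
\]
whenever $d \ge q-1$, and then identify the limiting probability generating function as that of a Poisson distribution with parameter $1$.

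First, I would recall the well-known limit $\lim_{m \to \infty}\left(1+\tfrac{x}{m}\right)^m = e^{x}$, valid for every real (or complex) $x$. Applying this with $x = t-1$ and $m = q$ gives the pointwise limit
\[
\lim_{q \to \infty}\Phi_{1,d}(t) = e^{t-1}
\]
for each fixed $t$. Next, I would observe that $e^{t-1} = e^{-1}\sum_{k\ge 0} \tfrac{t^k}{k!}$, which is precisely the probability generating function of the Poisson distribution with parameter $\lambda = 1$, whose mass function is $P(Y = k) = e^{-1}/k!$.

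The final step is to promote pointwise convergence of generating functions to convergence in distribution. Since $X_{1,d}$ and the Poisson random variable are $\NN$-valued, this follows from the standard continuity theorem for probability generating functions: if $\Phi_{n}(t) \to \Phi(t)$ for every $t \in (0,1)$ (equivalently on any neighborhood of the origin), then the associated probabilities $P(X_n = k)$ converge to the coefficients of $\Phi$, which here are $e^{-1}/k!$. Concretely, one may read off $P(X_{1,d} = k) = \tfrac{1}{k!}\Phi_{1,d}^{(k)}(0)$ and pass to the limit coefficient by coefficient, since $\Phi_{1,d}(t)$ is a polynomial of bounded degree $q$ whose coefficients converge term-by-term.

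There is no real obstacle: the entire content of the corollary is absorbed by Corollary \ref{cor:pgf}, and what remains is the classical exponential limit together with the elementary fact that the generating function $e^{t-1}$ characterizes the Poisson$(1)$ distribution. The only minor care is to phrase the convergence precisely (pointwise convergence of the PGFs, hence convergence in distribution), rather than claiming anything stronger such as convergence of all moments uniformly in $d$.
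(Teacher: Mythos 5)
Your proposal is correct and follows essentially the same route as the paper: invoke Corollary \ref{cor:pgf}, apply the classical limit $\lim_{q\to\infty}(1+\tfrac{t-1}{q})^q = e^{t-1}$, and recognize $e^{t-1}$ as the probability generating function of the Poisson distribution with parameter $1$. The paper states only the limit and leaves the continuity-theorem step implicit, so your additional care in justifying convergence in distribution is a harmless elaboration of the same argument.
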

\begin{proof}
The result follows immediately from 
\[
    \lim_{q \to \infty} \Phi_{1,d}(t) = \lim_{q \to \infty}\left(1+\frac{t-1}{q}\right)^q = e^{t-1}.
\]
\end{proof}

One may wonder why the probability distribution does not change if $d \ge q-1$. In the next section, we explain why this is the case in a more general context. 


\section{Multivariable case}\label{sec:multivar}

Next, we extend the computation in Section \ref{sec:singlevar} to the general case. We additionally assume that $d \ge q-1$.

Recall that the multiplicative group $\FF_q^*$ of nonzero elements in $\FF_q$ is cyclic \cite[Theorem V.5.3]{Hun80}. Thus, for every $a \in \FF_q$, we have $a^q = a$. This implies that every polynomial $f \in \FF_q[x_1, \cdots, x_n]$ in the ideal 
\begin{equation}\label{eqn:idealforzerofunctions}
    I := (x_1^q - x_1, \cdots, x_n^q - x_n)
\end{equation}
satisfies $f(\bp) = 0$ for all $\bp \in \FF_q^n$.

The following observation is essentially the Lagrange interpolation. For all $a \in \FF_q$, let 
\begin{equation}\label{eqn:Lpolynomial}
L_a(x) := \prod_{b \ne a}\frac{(x-b)}{(a-b)}.
\end{equation}
Note that $L_a(x)$ has degree $q-1$, $L_a(a) = 1$, and $L_a(b) = 0$ for all $b \in \FF_q \setminus \{a\}$. 

\begin{lemma}\label{lem:interpolation1}
Every $f \in \FF_q[x_1,..., x_n]$ can be written as
\begin{equation}\label{eqn:interpolation}
f = \sum_{a \in \FF_q}L_a(x_n)f(x_1,\cdots, x_{n-1}, a) + \sum_{i=1}^{n}(x_i^q-x_i)h_i
\end{equation}
for some $h_1, \cdots, h_n \in \FF_q[x_1,..., x_n]$, where $h_i$ is not a multiple of $(x_j^q-x_j)$ for $j < i$.
\end{lemma}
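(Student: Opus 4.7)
The plan is to prove the lemma in two stages: first, use Lagrange interpolation in the variable $x_n$ to peel off the summation term, and then decompose the remainder via iterative polynomial division against the generators $x_j^q - x_j$.

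First I would set $g := f - \sum_{a \in \FF_q} L_a(x_n) f(x_1, \ldots, x_{n-1}, a)$. Using the defining property $L_a(b) = \delta_{a,b}$ of the Lagrange basis from \eqref{eqn:Lpolynomial}, for any $b \in \FF_q$ one checks that $g(x_1, \ldots, x_{n-1}, b) = 0$. Viewing $g$ as a polynomial in $x_n$ with coefficients in $\FF_q[x_1, \ldots, x_{n-1}]$, every linear factor $x_n - b$ with $b \in \FF_q$ therefore divides $g$; since these factors are pairwise coprime, so does their product $\prod_{b \in \FF_q}(x_n - b) = x_n^q - x_n$. Hence $g = (x_n^q - x_n) H$ for some $H \in \FF_q[x_1, \ldots, x_n]$.

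Next I would successively apply polynomial division by $x_j^q - x_j$ in the single variable $x_j$, for $j = 1, 2, \ldots, n-1$. Setting $H_0 := H$, at each step write $H_{j-1} = (x_j^q - x_j) Q_j + H_j$ with $H_j$ having $x_j$-degree strictly less than $q$; this division is always possible because $x_j^q - x_j$ is monic in $x_j$. After $n-1$ steps one obtains $H = \sum_{j=1}^{n-1}(x_j^q - x_j) Q_j + H_{n-1}$. Defining $h_j := (x_n^q - x_n) Q_j$ for $j < n$ and $h_n := H_{n-1}$, multiplying through by $x_n^q - x_n$ produces the telescoping identity $g = \sum_{i=1}^n (x_i^q - x_i) h_i$, which combined with the first stage yields \eqref{eqn:interpolation}.

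The main obstacle, and the step requiring the most care, is the degree bookkeeping needed to verify that $h_i$ is not a multiple of $x_j^q - x_j$ for $j < i$. I would prove the stronger assertion that each $h_i$ has $x_j$-degree strictly less than $q$ for all $j < i$ by induction on the division steps: dividing $H_{j-1}$ by $x_j^q - x_j$ in the single variable $x_j$ leaves the degrees in all other variables unchanged, since $Q_j$ and $H_j$ are $\FF_q[x_1, \ldots, \widehat{x_j}, \ldots, x_n]$-linear combinations of the $x_j$-coefficients of $H_{j-1}$. Thus the inductive hypothesis that $H_{j-1}$ has $x_k$-degree less than $q$ for every $k < j$ passes to both $Q_j$ and $H_j$, and multiplication by $x_n^q - x_n$ does not affect the $x_k$-degree for $k < n$. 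Finally, a nonzero polynomial with $x_j$-degree less than $q$ cannot be a multiple of the degree-$q$ polynomial $x_j^q - x_j$, completing the argument.
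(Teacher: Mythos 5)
Your proof is correct, and in its second half it takes a genuinely different (and more self-contained) route than the paper. The first stage is identical: both arguments observe that $g := f - \sum_{a} L_a(x_n) f(x_1,\ldots,x_{n-1},a)$ vanishes when $x_n$ is specialized to any $b \in \FF_q$. From there the paper concludes that $g$ lies in the ideal $I = (x_1^q-x_1,\ldots,x_n^q-x_n)$ because it vanishes at every point of $\FF_q^n$; this invokes the converse of the inclusion stated before the lemma (namely that every polynomial vanishing on all of $\FF_q^n$ lies in $I$), which the paper does not prove, and it then disposes of the ``$h_i$ is not a multiple of $x_j^q-x_j$'' condition with a one-line appeal to rearranging coefficients. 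You instead note the stronger fact that $g(x_1,\ldots,x_{n-1},b)=0$ holds as a polynomial identity in $\FF_q[x_1,\ldots,x_{n-1}]$, so the factor theorem over the integral domain $\FF_q[x_1,\ldots,x_{n-1}]$ gives $(x_n^q-x_n) \mid g$ outright, and you then produce the $h_i$ explicitly by successive monic division, with honest degree bookkeeping to verify the non-multiple condition. This buys a constructive proof with no unproved input, at the cost of a longer argument; the paper's version is shorter but leans on an unjustified ideal-membership claim. One cosmetic point shared by both proofs: if some $h_i$ happens to be the zero polynomial, it is trivially a multiple of every $x_j^q-x_j$, so the non-multiple condition should be read as applying only to nonzero $h_i$ (which is all that is ever used later, since the $h_i$ are discarded in Proposition \ref{prop:stablizeddistribution} anyway).
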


\begin{proof}
It is straightforward to check that, for every $\bp = (p_1, \cdots, p_n)\in \FF_q^n$, 
\[
    f(\bp) = \sum_{a \in \FF_q}L_a(p_n)f(p_1, \cdots, p_{n-1}, a).
\]
Thus, $f - \sum_{a \in \FF_q}L_a(x_n)f(x_1, \cdots, x_{n-1}, a) \in I$ in \eqref{eqn:idealforzerofunctions}, implying it is an $\FF_q[x_1, \cdots, x_n]$-linear combination of $\{x_i^p - x_i\}$. That is, we can find $h_1, \cdots, h_n$. Finally, we may impose the last condition by rearranging the coefficients if $h_i$ divides $x_j^q - x_j$ for some $j < i$.
\end{proof}

\begin{lemma}\label{lem:interpolation2}
For any $q$ polynomials $\{g_a\}_{a \in \FF_q} \subset \FF_q[x_1, \cdots, x_{n-1}]_{(q-1, \cdots, q-1)}$, there exists a unique $f \in \FF_q[x_1, ..., x_n]_{(q-1, \cdots, q-1)}$ such that $f(x_1,..., x_{n-1}, a) = g_a(x_1,\cdots, x_{n-1})$ for all $a \in \FF_q$.
\end{lemma}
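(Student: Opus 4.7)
The plan is to write $f$ explicitly via Lagrange interpolation in the variable $x_n$, using the polynomials $L_a(x_n)$ introduced in \eqref{eqn:Lpolynomial}. Specifically, I would set
\[
    f(x_1, \ldots, x_n) := \sum_{a \in \FF_q} L_a(x_n)\, g_a(x_1, \ldots, x_{n-1}).
\]

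For existence, two quick checks suffice. First, $f \in \FF_q[x_1, \ldots, x_n]_{(q-1, \ldots, q-1)}$: each $L_a(x_n)$ has degree exactly $q-1$ in $x_n$ and involves no other variable, while each $g_a$ already lies in $\FF_q[x_1, \ldots, x_{n-1}]_{(q-1, \ldots, q-1)}$, so the multi-degree bound is preserved term by term. Second, the interpolation property: substituting $x_n = b$ for $b \in \FF_q$ and using the defining property $L_a(b) = \delta_{ab}$ of the Lagrange polynomials gives $f(x_1, \ldots, x_{n-1}, b) = g_b(x_1, \ldots, x_{n-1})$ immediately.

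For uniqueness, I would use a dimension count. The evaluation map
\[
    \Psi : \FF_q[x_1, \ldots, x_n]_{(q-1, \ldots, q-1)} \longrightarrow \bigoplus_{a \in \FF_q} \FF_q[x_1, \ldots, x_{n-1}]_{(q-1, \ldots, q-1)},
    \quad f \mapsto \bigl(f(x_1, \ldots, x_{n-1}, a)\bigr)_{a \in \FF_q}
\]
is $\FF_q$-linear, and both source and target have $\FF_q$-dimension $q^n$. The existence argument above shows $\Psi$ is surjective, hence bijective. Equivalently, writing any $h = f - f'$ in the kernel as $h = \sum_{i=0}^{q-1} c_i(x_1, \ldots, x_{n-1}) x_n^i$, the $q$ relations $h(x_1, \ldots, x_{n-1}, a) = 0$ for $a \in \FF_q$ form a Vandermonde linear system in the $c_i$'s with invertible coefficient matrix, forcing each $c_i = 0$.

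I do not anticipate a real obstacle here, as both halves follow essentially directly from the Lagrange polynomials $L_a$ already in hand. The only point that warrants care is verifying the per-variable degree bound in the explicit formula for $f$, which is what lets the dimension count close the argument cleanly.
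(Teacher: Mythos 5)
Your proof is correct, and it takes a somewhat different route from the paper on the uniqueness half. The paper phrases the lemma as a Chinese remainder theorem statement: the congruences $f \equiv g_a \pmod{(x_n - a)}$ have a unique solution modulo $\bigcap_a (x_n - a) = (x_n^q - x_n)$, and the explicit interpolation formula from Lemma \ref{lem:interpolation1} then supplies a representative in the degree box $\FF_q[x_1, \ldots, x_n]_{(q-1, \ldots, q-1)}$. Your existence argument is essentially the same explicit Lagrange formula, but your uniqueness argument — the dimension count for the evaluation map $\Psi$, or equivalently the Vandermonde system in the coefficients $c_i(x_1, \ldots, x_{n-1})$ — is more elementary and, arguably, more airtight: the CRT route gives uniqueness only modulo $(x_n^q - x_n)$, and one still has to observe that each coset of that ideal meets the degree box in at most one point (by division in $x_n$), a step the paper leaves implicit. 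Your linear-algebra argument handles that directly by working inside the $q^n$-dimensional space from the start. Both approaches rest on the same underlying fact that $\prod_{a \in \FF_q}(x_n - a) = x_n^q - x_n$, so the difference is one of packaging rather than substance, but your version is self-contained and avoids invoking the CRT for rings.
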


\begin{proof}
For each $a \in \FF_q$, consider the system of congruences
\begin{equation}\label{eqn:congruence}
   f \equiv g_a \mod{(x_n - a)}. 
\end{equation}
Since the principal ideals $(x_n - a)$ are pairwisely relatively prime, we may apply the Chinese remainder theorem \cite[Theorem III.2.25]{Hun80}. By the theorem, there exists a unique $f \in \FF_q[x_1, \cdots, x_n]$ modulo the ideal 
\[
    \bigcap_{a \in \FF_q}(x_n - a) = (\prod_{a \in \FF_q}(x_n - a)) = (x_n^q - x_n)
\]
satisfying the congruence relations in \eqref{eqn:congruence}. Indeed, by using formula \eqref{eqn:interpolation} in Lemma \ref{lem:interpolation1}, we can construct such an $f \in \FF_q[x_1, \cdots, x_n]_{(q-1, \cdots, q-1)}$ explicitly. 
\end{proof}

 \begin{proposition}\label{prop:stablizeddistribution}
Suppose that $d \ge q-1$. Then, the probability distribution of the number of zeros of a random polynomial in $\FF_q[x_1, \cdots, x_n]_{(d, \cdots, d)}$ is independent of $d$. 
\end{proposition}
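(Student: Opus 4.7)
The plan is to use an evaluation map to translate the distribution question into one about random functions $\FF_q^n \to \FF_q$, a distribution which visibly does not involve $d$. Define the $\FF_q$-linear evaluation map
\[
\rho_d : \FF_q[x_1, \cdots, x_n]_{(d, \cdots, d)} \longrightarrow \mathrm{Fun}(\FF_q^n, \FF_q), \qquad f \mapsto \bigl(\bp \mapsto f(\bp)\bigr).
\]
The number of zeros $X_{n,d}(f)$ depends only on $\rho_d(f)$, since it equals $|\rho_d(f)^{-1}(0)|$. So it suffices to show that, under the uniform distribution on $\FF_q[x_1, \cdots, x_n]_{(d, \cdots, d)}$, the pushforward measure on $\mathrm{Fun}(\FF_q^n, \FF_q)$ is also uniform; that target distribution clearly has nothing to do with $d$.

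The key step is proving that $\rho_d$ is surjective when $d \ge q-1$. Since $\FF_q[x_1, \cdots, x_n]_{(q-1, \cdots, q-1)} \subseteq \FF_q[x_1, \cdots, x_n]_{(d, \cdots, d)}$ in this regime, it is enough to establish surjectivity of the evaluation map restricted to polynomials of multidegree at most $(q-1, \cdots, q-1)$. I would prove this by induction on $n$: given $\phi : \FF_q^n \to \FF_q$, apply the inductive hypothesis to each slice $\phi(\cdot, a) : \FF_q^{n-1} \to \FF_q$ to obtain polynomials $g_a \in \FF_q[x_1, \cdots, x_{n-1}]_{(q-1, \cdots, q-1)}$, and then invoke Lemma \ref{lem:interpolation2} to glue these into a single $f \in \FF_q[x_1, \cdots, x_n]_{(q-1, \cdots, q-1)}$ realizing $\phi$. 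The base case $n=0$ is trivial.

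Once surjectivity is established, the rest is bookkeeping. Because $\rho_d$ is an $\FF_q$-linear surjection between finite-dimensional vector spaces, every fiber is a coset of $\ker \rho_d$ and hence has the same cardinality $q^{(d+1)^n - q^n}$. Consequently,
\[
P(X_{n,d} = k) = \frac{|\{\phi \in \mathrm{Fun}(\FF_q^n, \FF_q) : |\phi^{-1}(0)| = k\}|}{q^{q^n}},
\]
which is manifestly independent of $d$. The only substantive step is the surjectivity of $\rho_d$, which is precisely where the hypothesis $d \ge q-1$ enters, and for which Lemma \ref{lem:interpolation2} supplies the inductive glue; I do not anticipate any further obstacle.
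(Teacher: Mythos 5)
Your proof is correct. It rests on the same interpolation input as the paper's argument---Lemma \ref{lem:interpolation2} (equivalently, multivariate Lagrange interpolation) is exactly what makes your evaluation map surjective for $d \ge q-1$---but you package the reduction differently. The paper uses Lemma \ref{lem:interpolation1} to write each $f$ as a canonical representative in $\FF_q[x_1, \cdots, x_n]_{(q-1, \cdots, q-1)}$ plus an element of the ideal $(x_1^q - x_1, \cdots, x_n^q - x_n)$, and then replaces the sample space by the space of representatives; that smaller \emph{polynomial} sample space is what gets reused in the induction proving Theorem \ref{thm:distribution}. You instead push the uniform measure forward along the linear map $\rho_d$ all the way to $\mathrm{Fun}(\FF_q^n, \FF_q)$ and note that surjectivity forces every fiber to be a coset of $\ker \rho_d$, hence of the same cardinality $q^{(d+1)^n - q^n}$. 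Your version buys a cleaner justification of the equidistribution step: the paper's phrase ``uniquely chosen by selecting \dots with certain divisibility and degree conditions'' leaves the equal-fiber-size count implicit, whereas linearity plus surjectivity makes it immediate, and it isolates precisely where the hypothesis $d \ge q-1$ enters. The one thing it does not hand you directly is the identification of the stabilized distribution with the one on $\FF_q[x_1, \cdots, x_n]_{(q-1, \cdots, q-1)}$ needed later, but that follows at once since $\rho_{q-1}$ is a bijection (a linear surjection between spaces of equal dimension $q^n$) onto $\mathrm{Fun}(\FF_q^n, \FF_q)$.
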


\begin{proof}
Combining Lemma \ref{lem:interpolation1} and Lemma \ref{lem:interpolation2}, a polynomial $f \in \FF_q[x_1, \cdots, x_n]_{(d, \cdots, d)}$ can be uniquely chosen by selecting the polynomials $\{g_a\}_{a \in \FF_q} \subset \FF_q[x_1, \cdots, x_n]_{(q-1, \cdots, q-1)}$ and $h_1, \cdots, h_n \in \FF_q[x_1, \cdots, x_n]$ with certain divisibility and degree conditions. Note that the choice of $h_1, \cdots, h_n$ does not affect the number of zeros of $f$ because they are multiplied with $x_i^q - x_i$.  Therefore, while computing the distribution of the number of zeros, we may assume that the sample space is $\FF_q[x_1, \cdots, x_n]_{(q-1, \cdots, q-1)}$. 
\end{proof}

\begin{remark}
\begin{enumerate}
\item Equivalently, we may assume that the sample space is 
\[
    (\FF_q[x_1, \cdots, x_{n-1}]_{(q-1, \cdots, q-1)})^q. 
\]
This has an important consequence -- the independence of the choice of $\{g_a\}_{a \in \FF_q}$.
\item Proposition \ref{prop:stablizeddistribution} explains why the probability generating function in Corollary \ref{cor:pgf} stabilizes. 
\end{enumerate}
\end{remark}

For the proof, we henceforth assume that $d = q-1$. For $d \ge q$, we may obtain the same result by applying Proposition \ref{prop:stablizeddistribution}.

\begin{theorem}\label{thm:distribution}
The probability generating function 
\[
    \Phi_{n,d}(t) = \sum_{k \ge 0}P(X_{n,d}=k)t^k
\]
of the number of zeros of a random polynomial $f \in \FF_q[x_1,\cdots,x_n]_{(d, \cdots, d)}$ is given by 
\[
    \Phi_{n,d}(t) = (\Phi_{1,d}(t))^{q^{n-1}} = \left(1+\frac{t-1}{q}\right)^{q^n}.
\]
\end{theorem}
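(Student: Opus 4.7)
The plan is to reduce to the case $d = q-1$ via Proposition \ref{prop:stablizeddistribution}, and then to slice a polynomial in $n$ variables into $q$ independent polynomials in $n-1$ variables by fixing the last coordinate. This will give a multiplicative recursion $\Phi_{n,q-1}(t) = (\Phi_{n-1,q-1}(t))^q$, which combined with Corollary \ref{cor:pgf} yields the closed form by induction on $n$.

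More precisely, by Proposition \ref{prop:stablizeddistribution} and the subsequent remark, once $d \ge q-1$ we may take the sample space to be $\FF_q[x_1,\dots,x_n]_{(q-1,\dots,q-1)}$, and by Lemma \ref{lem:interpolation2} this vector space is in uniform measure-preserving bijection with the product $(\FF_q[x_1,\dots,x_{n-1}]_{(q-1,\dots,q-1)})^q$ via $f \mapsto (g_a)_{a\in\FF_q}$ with $g_a(x_1,\dots,x_{n-1}) := f(x_1,\dots,x_{n-1},a)$. Under this identification, a point $(p_1,\dots,p_n) \in \FF_q^n$ is a zero of $f$ if and only if $g_{p_n}(p_1,\dots,p_{n-1}) = 0$, so the zero set of $f$ partitions into the $q$ disjoint ``slices'' at $x_n = a$, and hence
\[
    X_{n,q-1}(f) \;=\; \sum_{a \in \FF_q} X_{n-1,q-1}(g_a).
\]
Crucially, the bijection is a bijection of $\FF_q$-vector spaces of the same dimension $q^n$, so sampling $f$ uniformly is equivalent to sampling the $g_a$ independently and uniformly from $\FF_q[x_1,\dots,x_{n-1}]_{(q-1,\dots,q-1)}$.

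Because the random variables $X_{n-1,q-1}(g_a)$ are therefore i.i.d., the probability generating function of their sum is the product, giving the recursion $\Phi_{n,q-1}(t) = (\Phi_{n-1,q-1}(t))^q$. Iterating yields $\Phi_{n,q-1}(t) = (\Phi_{1,q-1}(t))^{q^{n-1}}$, and Corollary \ref{cor:pgf} evaluates the base case as $\Phi_{1,q-1}(t) = (1 + (t-1)/q)^q$, so the product formula $(1+(t-1)/q)^{q^n}$ falls out immediately. For general $d \ge q-1$, Proposition \ref{prop:stablizeddistribution} ensures $\Phi_{n,d}(t) = \Phi_{n,q-1}(t)$, completing the proof.

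The only subtle point, and the one I would write out carefully, is the passage from the bijection of Lemma \ref{lem:interpolation2} to genuine probabilistic independence of the slices $g_a$: one must observe that the bijection is $\FF_q$-linear between two spaces of the same cardinality $q^{q^n}$, so the uniform distribution pulls back to the product of uniform distributions on the factors. Once this independence is in hand, everything else is bookkeeping.
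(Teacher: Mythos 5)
Your proof is correct and follows essentially the same route as the paper: both reduce to $d=q-1$ via Proposition \ref{prop:stablizeddistribution}, slice $f$ into the $q$ independent restrictions $g_a = f(x_1,\dots,x_{n-1},a)$ using Lemma \ref{lem:interpolation2}, and multiply the probability generating functions of the i.i.d.\ slices, with Corollary \ref{cor:pgf} as the base case. The paper phrases the product step through an auxiliary multivariable generating function and induction on $n$ rather than iterating the recursion, but this is only a cosmetic difference.
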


\begin{proof}
We proceed by induction on $n$. The $n = 1$ case holds by Corollary \ref{cor:pgf}.

For a random polynomial $f \in \FF_q[x_1, \cdots, x_n]_{(d, \cdots, d)}$, let $N(f)$ denote the number of zeros of $f$. Then, observe that choosing $f$ is equivalent to choosing $q$ random polynomials $\{g_a\}_{a \in \FF_q} \subset \FF_q[x_1, \cdots, x_{n-1}]_{(d, \cdots, d)}$, where each $g_a = f(x_1, \cdots, x_{n-1}, a)$. By \eqref{eqn:interpolation} (with $h_i = 0$), $N(f) = \sum_{a \in \FF_q}N(g_a)$. Then, we have

\begin{equation}\label{eqn:probdecomposition}
    P(N(f) = k) = \sum_{\sum_{a \in \FF_q} k_a =k}P(N(g_a) = k_a, \; \forall a \in \FF_q).
\end{equation}
Since the choice of $\{g_a\}_{a \in \FF_q}$ are independent, the right-hand side of \eqref{eqn:probdecomposition} equals 
\[
    \sum_{\sum_{a \in \FF_q}k_a=k}\left(\prod_{a \in \FF_q}P(N(g_a) = k_a)\right).
\]

We define a multivariable generating function
\begin{equation}\label{eqn:multigenftn}
    \Phi(t_a, \;a \in \FF_q) := 
    \sum_{\sum_{a \in \FF_q}k_a = k} \left(\prod_{a \in \FF_q}P(N(g_a) = k_a)t_a^{k_a}\right).
\end{equation}
The right-hand side of (\ref{eqn:multigenftn}) is given by the Cauchy product of the coefficients of polynomials. Therefore, 
\begin{equation}\label{eqn:multigenftn2}
    \Phi(t_a, \;a \in \FF_q) = 
    \prod_{a \in \FF_q}\left(\sum_{k_a \ge 0} P(N(g_a) = k_a) t_a^{k_a}\right)
    = \prod_{a \in \FF_q}\Phi_{n-1,d}(t_a)
    = \prod_{a \in \FF_q}(\Phi_{1,d}(t_a))^{q^{n-2}}
\end{equation}
by the induction hypothesis. 

On the other hand, setting $t_a = t$ for all $a \in \FF_q$, we have 
\begin{equation}\label{eqn:multigenftn3}
\begin{split}
    \Phi(t, \cdots, t) &= \sum_{\{k_a\}_{a \in \FF_q}}
    \prod_{a \in \FF_q}P(N(g_a) = k_a)t^{\sum k_a}\\
    &= \sum_{k \ge 0}\sum_{\sum_{a \in \FF_q} k_a = k} \left(\prod_{a \in \FF_q}P(N(g_a) = k_a)\right) t^k
    = \sum_{k \ge 0}P(N(f) = k)t^k = \Phi_{n,d}(t).
\end{split}
\end{equation}
Then, combining \eqref{eqn:multigenftn2} and \eqref{eqn:multigenftn3}, 
\[
    \Phi_{n,d}(t) = \Phi(t, \cdots, t) = \left(\Phi_{1,d}(t))^{q^{n-2}}\right)^{q} = (\Phi_{1,d}(t))^{q^{n-1}} = \left(1+\frac{t-1}{q}\right)^{q^n}.
\]
\end{proof}

As in the single variable case, we may compute the mean and variance from the generating function $\Phi_{n,d}(t)$. We leave the details of the computation to the interested readers. 

\begin{corollary}\label{corr: meanandvarn}
The mean $E$ and variance $V$ of $X_{n,d}$ are given by 
\[
    E(X_{n,d}) = q^{n-1}, \qquad V(X_{n,d}) = q^{n-1}\left(1 - \frac{1}{q}\right).
\]
\end{corollary}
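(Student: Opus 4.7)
The plan is to extract the mean and variance directly from the probability generating function $\Phi_{n,d}(t) = \left(1+\frac{t-1}{q}\right)^{q^n}$ established in Theorem \ref{thm:distribution}, using the standard identities $E(X_{n,d}) = \Phi_{n,d}'(1)$ and $V(X_{n,d}) = \Phi_{n,d}''(1) + \Phi_{n,d}'(1) - (\Phi_{n,d}'(1))^2$, exactly as was done in Corollary \ref{cor:meanandvar} for the single-variable case.

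First, I would differentiate: by the chain rule, $\Phi_{n,d}'(t) = q^{n-1}\left(1 + \tfrac{t-1}{q}\right)^{q^n - 1}$, so evaluating at $t=1$ gives $\Phi_{n,d}'(1) = q^{n-1}$, which is the claimed expectation. A second differentiation yields $\Phi_{n,d}''(t) = q^{n-2}(q^n - 1)\left(1 + \tfrac{t-1}{q}\right)^{q^n - 2}$, so $\Phi_{n,d}''(1) = q^{n-2}(q^n - 1) = q^{2n-2} - q^{n-2}$. Plugging into the variance formula gives $V(X_{n,d}) = (q^{2n-2} - q^{n-2}) + q^{n-1} - q^{2n-2} = q^{n-1} - q^{n-2} = q^{n-1}\left(1 - \tfrac{1}{q}\right)$, as desired.

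As a sanity check, one may observe that $\left(1 + \tfrac{t-1}{q}\right)^{q^n} = \left(\tfrac{1}{q} t + \tfrac{q-1}{q}\right)^{q^n}$ is precisely the probability generating function of a binomial random variable with $q^n$ trials and success probability $1/q$, whose mean and variance are $q^n \cdot \tfrac{1}{q} = q^{n-1}$ and $q^n \cdot \tfrac{1}{q}\cdot\tfrac{q-1}{q} = q^{n-1}(1 - \tfrac{1}{q})$ respectively; this confirms the computation and also offers the more conceptual explanation that the formula simply reflects the underlying binomial structure.

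There is no real obstacle here: once Theorem \ref{thm:distribution} is in hand, the corollary is purely a mechanical differentiation. The only minor item worth care is the algebraic simplification in the variance, where the two $q^{2n-2}$ terms cancel, but this is routine and mirrors the analogous $n=1$ calculation already appearing in Corollary \ref{cor:meanandvar}.
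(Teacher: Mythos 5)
Your proposal is correct and matches the paper's intended argument: the paper explicitly leaves the details to the reader, saying only that the mean and variance follow from $\Phi_{n,d}(t)$ as in the single-variable case (Corollary \ref{cor:meanandvar}), which is precisely the differentiation you carry out. Your computations check out, and the binomial sanity check is a nice bonus, though not part of the paper's reasoning.
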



\section{Questions}\label{sec:questions}

We leave a few related questions, supported by numerical computations. 

In the multivariable case, as shown in Section \ref{sec:multivar}, choosing $\FF_q[x_1, \cdots, x_n]_{(d, \cdots, d)}$ enables us to use induction on the number of variables. Perhaps another natural choice of a finite sample space is $\FF_q[x_1, \cdots, x_n]_d$, the set of polynomials whose total degree is at most $d$. However, a similar argument does not work for $\FF_q[x_1, \cdots, x_n]_d$, as the numbers of zeros for the restrictions $f(x_1, \cdots, x_{n-1}, a)$ and $f(x_1, \cdots, x_{n-1}, a')$ are not independent.

\begin{question}\label{que:totaldegree}
What is the probability generating function of the number of distinct zeros of a random polynomial in $\FF_q[x_1, \cdots, x_n]_d$?
\end{question}

From numerical investigation, it seems that the probability distribution for $\FF_q[x_1, \cdots, x_n]_d$ resembles the distribution for $\FF_q[x_1, \cdots, x_n]_{(d, \cdots , d)}$. Note that this is not entirely obvious, as the ratio of the dimensions of $\FF_q[x_1, \cdots, x_n]_d$ and $\FF_q[x_1, \cdots, x_n]_{(d, \cdots, d)}$ is ${n +d -1 \choose d}/(d+1)^n$, which approaches zero as $n \to \infty$. On the other hand, from Remark \ref{rmk:totaldegreeaverage}, we know their expected values are the same.

Another natural extension is the case of polynomials over a projective space. More precisely, let $\FF_q[x_1, \cdots, x_n]_d^h$ be the $\FF_q$-vector space consisting of homogeneous polynomials of degree $d$. Let $Y_{n,d}$ be the random variable of the number of nontrivial zeros up to nonzero scalar multiplication of a random polynomial $f \in \FF_q[x_1, \cdots, x_n]_d^h$. By employing the incidence variety method in Section \ref{sec:average}, it is straightforward to check that the expected value of $Y_{n,d}$ is 

\[
E(Y_{n, d}) = \frac{\left( q^{n+1}-1 \right)\left( q^{\binom{n+d}{d}-1}-1 \right)}{\left( q-1 \right)\left( q^{\binom{n+d}{d}}-1 \right)} \approx q^{n-1}.
\]

\begin{question}\label{que:projective}
What is the probability generating function of $Y_{n,d}$?
\end{question}

Another possible line of inquiry is the distribution for a random polynomial over $\ZZ/ m \ZZ$, where $m$ may be composite. In particular, let $K_{n,d}^m$ be the random variable of the number of distinct zeros of a random polynomial in $\ZZ/ m \ZZ[x_1, \cdots, x_n]_{(d, \cdots , d)}$. 

\begin{question}\label{que:squarefree}
Suppose $m = p_1 \cdots p_r$ is the square-free product of $r$ distinct primes. What is the probability generating function of $K_{n,d}^m$?
\end{question}

\begin{remark}\label{rmk: squarefree}
By the Chinese remainder theorem, any random $f \in \ZZ/ m \ZZ[x_1, \cdots, x_n]_{(d, \cdots , d)}$ is equivalent to a random $r$-tuple of polynomials $(f_{p_{1}}, \cdots , f_{p_{r}})$, where each $f_{p_{i}}$ denotes the reduction of $f$ in $\ZZ/ p_i \ZZ[x_1, \cdots, x_n]_{(d, \cdots , d)}$.
\end{remark}

Despite this fact, computing the probability generating function of  $K_{n,d}^m$ remains a nontrivial task. Numerically, however, one can discern some statistical patterns. 

\begin{conjecture}\label{conj: squarefree}
If $m$ is square-free, then the expected value $E$ of $K_{n,d}^m$ is
\[
    E(K_{n,d}^m) = m^{n-1}.
\]

Further, the distribution is stable for $d \ge p_r - 1$, where $p_r$ is the largest prime factor of $m$.
\end{conjecture}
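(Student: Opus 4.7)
The plan is to exploit the Chinese Remainder Theorem (CRT) decomposition sketched in Remark \ref{rmk: squarefree} to reduce everything to the prime-field results already established in Sections \ref{sec:average} and \ref{sec:multivar}.

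First I would verify that CRT on coefficients yields a ring isomorphism
\[
\ZZ/m\ZZ[x_1, \cdots, x_n]_{(d, \cdots, d)} \;\xrightarrow{\;\sim\;}\; \prod_{i=1}^r \FF_{p_i}[x_1, \cdots, x_n]_{(d, \cdots, d)},\qquad f \longmapsto (f_{p_1}, \cdots, f_{p_r}),
\]
and that it respects the per-variable degree bound monomial by monomial: reduction mod $p_i$ never raises degree, and CRT on coefficients lifts any tuple of $(d,\cdots,d)$-bounded polynomials. Under this bijection, the uniform measure on the left corresponds to the product of uniform measures on the right, so a uniformly random $f$ is the same data as an $r$-tuple of \emph{independent} uniformly random $f_{p_i}$.

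Next I would combine this with the set-theoretic CRT bijection $(\ZZ/m\ZZ)^n \cong \prod_{i=1}^r \FF_{p_i}^n$, $\bp \mapsto (\bp^{(1)}, \cdots, \bp^{(r)})$. Because evaluation commutes with reduction, $f(\bp) \equiv 0 \pmod{m}$ if and only if $f_{p_i}(\bp^{(i)}) = 0$ for every $i$, so counting zeros factors multiplicatively:
\[
K_{n,d}^m(f) \;=\; \prod_{i=1}^r N_{p_i}(f_{p_i}),
\]
where $N_{p_i}$ denotes the number-of-zeros variable for $\FF_{p_i}[x_1, \cdots, x_n]_{(d, \cdots, d)}$ (that is, $X_{n,d}$ taken with $q = p_i$). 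Since the $f_{p_i}$ are independent, so are the $N_{p_i}(f_{p_i})$. From here, both assertions of the conjecture fall out. For the mean, independence and Theorem \ref{thm:affinemean} give
\[
E(K_{n,d}^m) \;=\; \prod_{i=1}^r E(N_{p_i}) \;=\; \prod_{i=1}^r p_i^{n-1} \;=\; m^{n-1}.
\]
For stability, Proposition \ref{prop:stablizeddistribution} makes each $N_{p_i}$'s distribution $d$-independent once $d \ge p_i - 1$; taking $d \ge p_r - 1$ ensures this simultaneously for every $i$, independence promotes it to the joint distribution of $(N_{p_1}, \cdots, N_{p_r})$, and hence to the product $K_{n,d}^m$.

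The main obstacle is not in proving what the conjecture asserts — CRT makes both statements essentially formal bookkeeping — but in the natural next step of computing the full probability generating function of $K_{n,d}^m$. A product of independent integer-valued random variables has no PGF expressible in terms of the factors' PGFs, so the decomposition above yields no direct analogue of Theorem \ref{thm:distribution}. One would instead have to track the joint distribution explicitly and evaluate $\sum t^{\prod k_i}$ over all tuples $(k_1,\cdots,k_r)$ in $\prod_i \{0, 1, \cdots, p_i\}$, weighted by the products of the known per-prime probabilities — a combinatorial task that looks genuinely harder than the inductive argument of Section \ref{sec:multivar}.
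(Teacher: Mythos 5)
Your argument is correct, and it is worth noting that the paper does not prove this statement at all: it is recorded as a conjecture supported only by numerical evidence, with Remark \ref{rmk: squarefree} observing the CRT decomposition of the sample space but stopping there. What you have done is push that remark to its logical conclusion. The two key points --- that coefficientwise CRT gives a measure-preserving bijection $\ZZ/m\ZZ[x_1,\cdots,x_n]_{(d,\cdots,d)} \cong \prod_i \FF_{p_i}[x_1,\cdots,x_n]_{(d,\cdots,d)}$ making the reductions $f_{p_i}$ independent and uniform, and that the pointwise CRT bijection $(\ZZ/m\ZZ)^n \cong \prod_i \FF_{p_i}^n$ forces the zero count to factor as $K_{n,d}^m(f) = \prod_i N_{p_i}(f_{p_i})$ --- are both sound, and from there the mean claim follows from Theorem \ref{thm:affinemean} (which needs no hypothesis on $d$) together with multiplicativity of expectation over independent factors, while the stability claim follows by applying Proposition \ref{prop:stablizeddistribution} to each marginal and noting that the law of a function of an independent tuple is determined by the marginal laws. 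So both assertions of the conjecture are in fact theorems, and your proof settles them. You are also right about where the genuine difficulty lies: because the probability generating function of a \emph{product} of independent integer-valued variables is not expressible through the factors' generating functions, this decomposition does not answer Question \ref{que:squarefree}, and no analogue of the inductive Cauchy-product argument of Theorem \ref{thm:distribution} is available. The one thing I would ask you to make explicit in a final write-up is the verification that reduction mod $p_i$ and CRT lifting both preserve the per-variable degree bound (a cardinality count $m^{(d+1)^n} = \prod_i p_i^{(d+1)^n}$ confirms the bijection), but that is bookkeeping, not a gap.
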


Another potential question is the following:

\begin{question}\label{que:primepower}
If $m$ is some prime power $p^k$, what is the probability generating function of $K_{n,d}^m$?
\end{question}

Numerically, despite similarities to Question \ref{que:squarefree}, this case presents some additional subtleties. We record our observations for particular values of $m, d$ and $n$ below with a supporting graph (Figure \ref{fig:newpic2}).

\begin{conjecture}\label{conj: primepower}
If $m$ is some prime power $p^k$, then the expected value $E$ of $K_{n,d}^m$ is also
\[
    E(K_{n,d}^m) = m^{n-1}.
\]

In the single variable case, for $d > 1$, the variance $V$ of $K_{1,d}^m$ is

\[
    V(K_{1,d}^m) = k\left( 1 - \frac{1}{p} \right).
\]

Further, the distribution stabilizes as $d$ increases.
\end{conjecture}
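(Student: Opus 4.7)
For the expected value $E(K_{n,d}^m) = m^{n-1}$, I would rerun the incidence-variety computation of Theorem~\ref{thm:affinemean} verbatim, with $\FF_q$ replaced by $\ZZ/m\ZZ$. The only step requiring verification is that the evaluation map $\mathrm{ev}_\bp : \ZZ/m\ZZ[x_1, \ldots, x_n]_{(d, \ldots, d)} \to \ZZ/m\ZZ$ is surjective, which is immediate since constants lie in the domain. Every fiber of $\pi_1$ then has the same size $|V|/m$, and the average computes to $m^{n-1}$ exactly as before.

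For the single-variable variance, the plan is to use the second-moment identity $V = E[N^2] - 1$ together with $E[N^2] = \sum_{a, b \in \ZZ/p^k\ZZ} P(f(a) = f(b) = 0)$. The $p^k$ diagonal terms contribute $1$. For $a \ne b$, the joint evaluation $f \mapsto (f(a), f(b))$ is a $\ZZ/p^k\ZZ$-module homomorphism whose matrix is
\[
\begin{pmatrix} 1 & a & a^2 & \cdots & a^d \\ 1 & b & b^2 & \cdots & b^d \end{pmatrix}.
\]
Setting $s := v_p(b - a)$, every $2 \times 2$ minor factors as $a^i b^i (b^{j-i} - a^{j-i})$ and has $p$-adic valuation at least $s$, while the $(0, 1)$-minor equals $b - a$ and attains this valuation. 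Hence, by the Smith normal form over the principal ideal ring $\ZZ/p^k\ZZ$, the image of the joint evaluation is isomorphic to $\ZZ/p^k\ZZ \oplus p^s \ZZ/p^k\ZZ$, so $P(f(a) = f(b) = 0) = p^{s-2k}$. Counting the $p^k(p-1)p^{k-s-1}$ pairs with $v_p(b-a) = s$ and summing over $s = 0, 1, \ldots, k-1$ yields $E[N^2] = 1 + k(1 - 1/p)$, from which the formula for $V$ follows. (Here $d \ge 1$ suffices, so the conjecture's hypothesis $d > 1$ is in fact slightly overcautious.)

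For stabilization, I would observe that the law of $K_{n,d}^{p^k}$ is determined entirely by the image $W_d$ of the restriction map $\pi : V_d \to \mathrm{Fun}((\ZZ/p^k\ZZ)^n, \ZZ/p^k\ZZ)$, where $V_d := \ZZ/p^k\ZZ[x_1, \ldots, x_n]_{(d, \ldots, d)}$. Since $\pi$ is $\ZZ/p^k\ZZ$-linear, all its fibers have the same cardinality, so the pushforward of the uniform measure on $V_d$ is uniform on $W_d$; hence the distribution of $K_{n,d}^{p^k}$ depends only on $W_d$. The chain $W_0 \subseteq W_1 \subseteq \cdots$ lies inside the finite set $\mathrm{Fun}((\ZZ/p^k\ZZ)^n, \ZZ/p^k\ZZ)$ and must stabilize, and so must the distribution.

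The main obstacle I foresee lies not in proving stabilization itself but in locating a sharp and explicit threshold $d_0(p, k, n)$. The field case admits $d_0 = q-1$ cleanly via Lagrange interpolation, but over $\ZZ/p^k\ZZ$ not every function on $\ZZ/p^k\ZZ$ is a polynomial function. The natural substitute would combine the Mahler basis $\binom{x}{n}$ with Kempner's classical characterization of null polynomials modulo $p^k$; for $n = 1$, one then expects $d_0$ to equal the least integer $N$ with $v_p(N!) \ge k$. Upgrading this to $n \ge 2$ will be delicate because the Chinese-remainder-theorem reduction of Section~\ref{sec:multivar} breaks down over $\ZZ/p^k\ZZ$: the principal ideals $(x_n - a)$ for distinct $a \in \ZZ/p^k\ZZ$ are not pairwise coprime, since the differences $a - b$ are no longer units.
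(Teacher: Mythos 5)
This statement is labeled a \emph{conjecture} in the paper: the authors offer no proof, only numerical evidence and a supporting figure, and they explicitly flag the prime-power case as presenting ``additional subtleties.'' Your proposal, by contrast, appears to be a complete and correct proof of all three assertions, so there is no paper argument to compare it against --- you have gone strictly further than the source. The expected-value part is exactly the incidence-variety count of Theorem~\ref{thm:affinemean}, and it does go through verbatim over any finite commutative ring since $\mathrm{ev}_{\bp}$ is a surjective module homomorphism whose fibers are cosets of its kernel. The variance computation is the substantive new content: the identity $E[N^2]=\sum_{a,b}P(f(a)=f(b)=0)$, the valuation count showing the gcd of the $2\times 2$ minors $a^ib^i(b^{j-i}-a^{j-i})$ is exactly $p^{s}$ with $s=v_p(b-a)\le k-1$, and hence $P(f(a)=f(b)=0)=p^{s-2k}$, all check out (one should phrase the Smith normal form step by lifting the Vandermonde matrix to $\ZZ_p$ and reducing, but that is routine); the final sum $\sum_{s=0}^{k-1}p^k(p-1)p^{k-s-1}\cdot p^{s-2k}=k(1-1/p)$ is correct, and I verified it agrees with direct enumeration for $(p,k,d)=(2,2,1)$ and $(3,2,1)$ --- so your observation that $d\ge 1$ suffices, making the conjecture's hypothesis $d>1$ overcautious, also seems right. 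The stabilization argument (the law of $K_{n,d}^{p^k}$ is the pushforward of uniform measure under the restriction map to functions, which is uniform on the image $W_d$; the chain $W_0\subseteq W_1\subseteq\cdots$ in a finite set is eventually constant) is clean and correct, and it sidesteps precisely the obstruction you identify, namely that the Lagrange/CRT interpolation of Section~\ref{sec:multivar} fails over $\ZZ/p^k\ZZ$ because the ideals $(x_n-a)$ are no longer pairwise coprime. The only thing your argument does not deliver --- as you candidly note --- is an explicit stabilization threshold analogous to $d\ge q-1$, but the conjecture as stated does not ask for one.
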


For $n \ge 1$ variables, one can also observe the following interesting behavior regarding the probability distribution of $K_{n,d}^m$.

\begin{observation} 
If $m$ is some prime power $p^k$, the probability that $K_{n,d}^m = \beta$ is nonzero only when $\beta$ is a multiple of $p^{n-1}$. 
\end{observation}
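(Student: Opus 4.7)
The plan is to show that for $k \ge 2$ and any $f \in \ZZ/p^k\ZZ[x_1,\ldots,x_n]_{(d,\ldots,d)}$, the number $N(f)$ of zeros of $f$ in $(\ZZ/p^k\ZZ)^n$ is a multiple of $p^{n-1}$. (The case $n = 1$ is vacuous, and for $k = 1$ the observation as literally stated is incompatible with Theorem \ref{thm:distribution}: already $xy + 1 \in \FF_p[x, y]$ has $p - 1$ zeros, so one should read the observation under the mild additional hypothesis $k \ge 2$.)

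The key idea is that, in $\ZZ/p^k\ZZ$, translates by the subgroup $G := (p^{k-1}\ZZ/p^k\ZZ)^n \cong \FF_p^n$ are controlled solely by the formal linear part of $f$. Writing any $\bq \in G$ coordinate-wise as $q_i = p^{k-1} r_i$ with $r_i \in \FF_p$, every product $q_i q_j$ lies in $p^{2k-2}\ZZ/p^k\ZZ$, which vanishes modulo $p^k$ since $2k - 2 \ge k$. Applying the formal Taylor expansion of $f$ in $\ZZ/p^k\ZZ[\bx, \by]$, the quadratic-and-higher terms in $\by$ die upon substituting $\by = \bq$, yielding
\[
f(\bp + \bq) \equiv f(\bp) + p^{k-1}\sum_{i=1}^n r_i\,\partial_i f(\bp) \pmod{p^k}.
\]

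Next, partition $(\ZZ/p^k\ZZ)^n$ into $p^{(k-1)n}$ cosets of $G$ and count zeros coset by coset. If a coset $\bp + G$ contains a zero $\bp_0$ of $f$, the identity above shows that $\bp_0 + \bq$ is also a zero of $f$ if and only if $\sum_i r_i\,\overline{\partial_i f(\bp_0)} \equiv 0 \pmod p$, where the bar denotes reduction modulo $p$. This is a single $\FF_p$-linear equation in $(r_1, \ldots, r_n) \in \FF_p^n$, whose solution set has cardinality $p^{n-1}$ or $p^n$ according to whether the linear form is nonzero or identically zero. Thus every nonempty coset contributes a multiple of $p^{n-1}$ to $N(f)$, and empty cosets contribute zero, so $N(f)$ itself is a multiple of $p^{n-1}$. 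Consequently $P(K_{n,d}^m = \beta) = 0$ unless $p^{n-1}\mid \beta$.

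The main subtlety is choosing the right group of translations: one needs a subgroup of $(\ZZ/p^k\ZZ)^n$ whose pairwise products vanish modulo $p^k$, which pins down $G = (p^{k-1}\ZZ/p^k\ZZ)^n$ and explains why the statement genuinely requires $k \ge 2$. Once the Taylor linearization is in hand, the remaining bookkeeping reduces to counting solutions of one linear equation over $\FF_p$.
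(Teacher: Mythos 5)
The paper offers no proof of this Observation---it is recorded purely as an empirical finding supported by numerical data and the figure for $\ZZ/9\ZZ[x_1,x_2]_{(2,2)}$---so your argument is not a variant of the paper's approach but an actual proof of something the authors only conjectured from computation. Your argument is correct. The formal identity $f(\bx+\by)=f(\bx)+\sum_i y_i\,\partial_i f(\bx)+(\text{terms in the ideal generated by the }y_iy_j)$ holds with integer coefficients, hence over $\ZZ/p^k\ZZ$, and for $k\ge 2$ every product of two elements of $p^{k-1}\ZZ/p^k\ZZ$ vanishes, so the zero set of $f$ meets each coset of $G=(p^{k-1}\ZZ/p^k\ZZ)^n$ in either the empty set, an affine hyperplane's worth ($p^{n-1}$ points), or the whole coset ($p^n$ points); summing over cosets gives $p^{n-1}\mid N(f)$ for every individual $f$, which is stronger than the probabilistic statement. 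Your side remark is also a genuine (and useful) correction to the paper: for $k=1$ and $n\ge 2$ the statement as written is false, as $xy+1$ has exactly $p-1$ zeros and Theorem \ref{thm:distribution} shows $P(X_{n,d}=\beta)>0$ for every $0\le\beta\le p^n$, so the Observation must be read with the implicit hypothesis $k\ge 2$ (the case $n=1$ being vacuous). The only point worth stating more carefully in a final write-up is that the count within a coset uses a fixed base zero $\bp_0$ and the bijection $(r_1,\dots,r_n)\mapsto \bp_0+(p^{k-1}r_1,\dots,p^{k-1}r_n)$, under which the zeros in the coset correspond exactly to the solutions of the single $\FF_p$-linear equation $\sum_i r_i\,\overline{\partial_i f(\bp_0)}=0$; you say this, and it is all that is needed.
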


\begin{figure}[!ht]
  \includegraphics[scale=0.40] {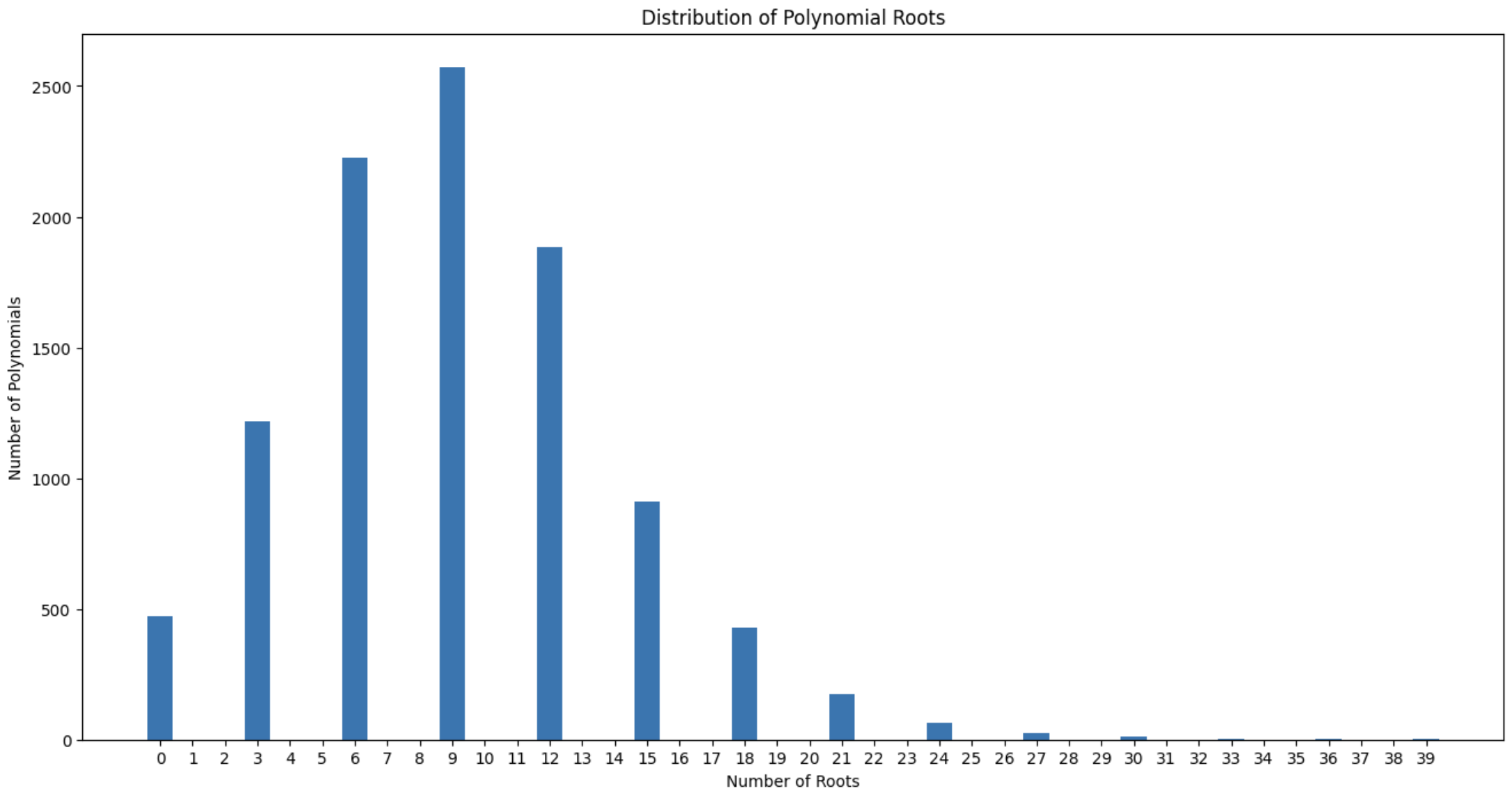}
  \caption{Distribution of zeros over $\ZZ/ 9 \ZZ[x_1,x_2]_{(2,2)}$.}
  \label{fig:newpic2}
\end{figure}

One might inquire similarly about polynomials with integer coefficients. However, it seems that obtaining an integral zero for a random polynomial in $\ZZ[x]$ is an exceedingly rare event. In other words, the expected number of integral zeros of a random polynomial in $\ZZ[x]_d$ is zero \cite{BSK20} (we encourage readers to consider the $d = 1$ case by themselves). 

Finally, another interesting question arises if we consider the $p$-adic numbers. 

\begin{question}\label{que:p-adic}
Let $\ZZ_p$ be the set of $p$-adic integers and $W_{n, d}$ be the random variable of the number of zeros of a random polynomial in $\ZZ_p[x_1, \cdots, x_n]_{(d, \cdots, d)}$. What is the probability distribution of $W_{n, d}$? One may ask a similar question for $\QQ_p$, the field of fractions of $\ZZ_p$. Can we relate this distribution to the case of $\ZZ/p^k\ZZ[x_1, \cdots, x_n]_{(d, \cdots, d)}$?
\end{question}


\begin{thebibliography}{DPSZ02}

\bibitem[BSK20]{BSK20}
L. Bary-Soroker and G. Kozma.
Irreducible polynomials of bounded height.
{\em Duke Math. J.} 169(2020), no.4, 579--598.

\bibitem[BCFG22]{BCFG22}
M. Bhargava, J. Cremona, T. Fisher, and S. Gajovi\'c. 
The density of polynomials of degree $n$ over $\ZZ_p$ having exactly $r$ roots in $\QQ_p$.
{\em Proc. Lond. Math. Soc.} (3) 124 (2022), no. 5, 713--736.

\bibitem[DPSZ02]{DPSZ02}
A. Dembo, B. Poonen, Q.-M. Shao, and O. Zeitouni. 
Random polynomials having few or no real zeros
{\em J. Amer. Math. Soc.} 15 (2002), no. 4, 857--892.

\bibitem[EK95]{EK95}
A. Edelman and E. Kostlan, 
How many zeros of a random polynomial are real?
{\em Bull. Amer. Math. Soc.} 32 (1995), 1--37.

\bibitem[Eva06]{Eva06}
S. Evans, 
The expected number of zeros of a random system of $p$-adic polynomials.
{\em Electron. Comm. Probab.} 11 (2006), 278--290.

\bibitem[Kac43]{Kac43}
M. Kac, 
On the average number of real roots of a random algebraic equation. 
{\em Bull. Amer. Math. Soc.} 49 (1943), 314--320 and 938.

\bibitem[Hun80]{Hun80}
T. Hungerford,
{\em Algebra}.
Graduate Texts in Mathematics, 73.
Springer-Verlag, New York-Berlin, 1980. xxiii+502 pp.

\bibitem[Leo06]{Leo06}
V. K. Leont\'ev, 
On the roots of random polynomials over a finite field. 
{\em Mat. Zametki} 80 (2006), no.2, 313--316; translation in
{\em Math. Notes} 80 (2006), no.1--2, 300--304.

\bibitem[NV21]{NV21}
O. Nguyen and V. Vu.
Random polynomials: central limit theorems for the real roots.
{\em Duke Math. J.} 170 (2021), no. 17, 3745--3813.

\end{thebibliography}
\end{document}